\journal{Int J Approx Reas}
\def\True{\mbox{True}}
\def\False{\mbox{False}}
\def\Void{\mbox{Void}}
\def\myscale{0.52} 
\def\C{{\cal C}}
\def\F{{\cal F}}
\def\I{{\cal I}}
\def\H{{\cal H}}
\def\K{{\cal K}}
\def\P{{\cal P}}
\def\S{{\cal S}}
\theoremstyle{definition} 
\newtheorem{definition}{Definition}
\newtheorem{proposition}{Proposition}
\newtheorem{theorem}{Theorem}
\newtheorem{corollary}{Corollary}
\newtheorem{remark}{Remark}
\newtheorem{algorithm}{Algorithm}
\newtheorem{example}{Example}
\begin{document}
\begin{frontmatter}
\title{Probabilistic entailment in the setting of coherence: The role of quasi conjunction and inclusion relation \tnoteref{t1}}
\tnotetext[t1]{This paper is a revised and expanded version of \cite{GiSa11b}.}

\author[ag]{Angelo Gilio}
\ead{angelo.gilio@sbai.uniroma1.it}
\author[gs]{Giuseppe  Sanfilippo\corref{cor2}}
\ead{giuseppe.sanfilippo@unipa.it}
\cortext[cor2]{Principal corresponding author}

\address[ag]{Dipartimento di Scienze di Base e Applicate per l'Ingegneria,
University of Rome ``La Sapienza'',
Via Antonio Scarpa 16, 00161 Roma, Italy}
\address[gs]{Dipartimento di Scienze Statistiche e Matematiche ``S. Vianelli'',University of Palermo,
Viale delle Scienze ed.13, 90128 Palermo, Italy
}

\begin{abstract}
In this paper, by adopting a coherence-based probabilistic approach to default reasoning, we focus the study on the logical operation of quasi conjunction and the Goodman-Nguyen inclusion relation for conditional events. We recall that quasi conjunction is a basic notion for defining consistency of conditional knowledge bases. By deepening some results given in a previous paper we show that, given any finite family of conditional events $\mathcal{F}$  and any nonempty subset $\mathcal{S}$ of $\mathcal{F}$, the family  $\mathcal{F}$ p-entails the quasi conjunction $\mathcal{C}(\mathcal{S})$; then, given any conditional event $E|H$, we analyze the equivalence between p-entailment of $E|H$ from $\mathcal{F}$ and p-entailment of $E|H$ from $\mathcal{C}(\mathcal{S})$, where $\mathcal{S}$ is some nonempty subset of $\mathcal{F}$. We also illustrate some alternative theorems related with p-consistency and p-entailment. Finally, we deepen the study of the connections between the notions of p-entailment and inclusion relation by introducing for a pair $(\mathcal{F},E|H)$ the (possibly empty) class $\K$ of the subsets $\mathcal{S}$ of $\mathcal{F}$ such that $\mathcal{C}(\mathcal{S})$ implies $E|H$. We show that the class $\K$ satisfies many properties; in particular $\K$ is additive and has a greatest element which can be determined by applying a suitable algorithm.
\end{abstract}

\begin{keyword}
Coherence \sep Probabilistic default reasoning \sep p-entailment \sep quasi conjunction \sep Goodman-Nguyen's inclusion relation \sep QAND rule
\end{keyword}
\end{frontmatter}

\section{Introduction}
Probabilistic reasoning is a basic tool for the treatment of uncertainty in many applications of statistics and artificial intelligence; in particular, it is useful for a flexible numerical approach to inference rules in nonmonotonic reasoning, for the psychology of uncertain reasoning and for the management of uncertainty on semantic web (see, e.g.,  \cite{FPMK11,GiOv12,GiLu02,LuPS11,LuSt08,PfKl06,PfKl09}. \\
This work concerns nonmonotonic reasoning, an important topic in the field of artificial intelligence which has been studied by many authors, by using symbolic and/or numerical formalisms (see, e.g. \cite{BeDP97,2002BGLS-JANCL,BGLS05,CoSc02,DuboisPrade1994,Gili12,KrLM90}). As is well known, differently from classical (monotonic) logic, in (nonmonotonic) commonsense reasoning if a conclusion $C$ follows from some premises, then $C$ may be retracted when the set of premises is enlarged; that is, adding premises may invalidate previous conclusions.
Among the numerical formalisms connected with nonmonotonic reasoning, a remarkable theory is represented by the Adams probabilistic logic of conditionals (\cite{Adam75}), which can be developed with full generality in the setting of coherence. As is well known, based on the coherence principle of de Finetti (\cite{deFi70}), conditional probabilities can be directly assigned to conditional assertions, without assuming that conditioning events have a positive probability (see, e.g., \cite{BiGS03b,BiGS09,BiGS12,CaVa02,CoSc02,Gilio2002,Sanf12}).
We also recall that this approach does not require the assertion of complete distributions and is largely applied in statistical analysis and decision theory (see, for instance, \cite{BCTV02,BrCV12,CaGV03,CaLS07,CaRV10,CoVa06,LaSA12}).
 In Adams' work a basic notion is the quasi conjunction of conditionals, which has a strict relationship with the property of consistency of conditional knowledge bases. Quasi conjunction also plays a relevant role in the work of Dubois and Prade on conditional objects (\cite{DuboisPrade1994}, see also \cite{BeDP97}), where a suitable QAND rule has been introduced to characterize entailment from a
conditional knowledge base. Recently (\cite{GiSa10}, see also \cite{GiSa11}), we have studied some probabilistic aspects related with the QAND rule and with the conditional probabilistic logic of Adams.
We continue such a study in this paper by giving further results on the role played  by quasi conjunction and Goodman-Nguyen's inclusion relation in the probabilistic entailment under coherence. \\
The paper is organized as follows: In Section 2 we first recall some notions and results on coherence; then, we recall basic notions in probabilistic default reasoning; we recall the operation of quasi conjunction and the inclusion relation for conditional events; finally, we recall the notion of entailment for conditional objects. In Section 3 we give a result which shows the p-entailment from a family of conditional events $\mathcal{F}$ to the quasi conjunction $\mathcal{C}(\mathcal{S})$, for every nonempty subset $\mathcal{S}$ of $\mathcal{F}$; we give another result which analyzes many aspects connected with the equivalence between the p-entailment of a conditional event $E|H$ from $\mathcal{F}$ and the p-entailment of $E|H$ from $\mathcal{C}(\mathcal{S})$, where $\mathcal{S}$ is some nonempty subset of $\mathcal{F}$; then, we give some alternative theorems related with p-consistency and p-entailment.
In Section 4 we introduce for a pair $(\mathcal{F},E|H)$ the class $\mathcal{K}$ of the subsets $\mathcal{S}$ of $\mathcal{F}$ such that $\mathcal{C}(\mathcal{S})$ implies $E|H$. We show that $\mathcal{K}$ satisfies many properties and we give some examples; in particular, we show that  $\mathcal{K}$ is additive and has a greatest element (if any) which can be determined by a suitable  algorithm. In Section 5 we give some conclusions.
\section{Some Preliminary Notions}\label{PRELIM}
In this section we recall some basic notions and results on the following topics: (i) coherence of conditional probability assessments; (ii) probabilistic default reasoning; (iii) inclusion relation of Goodman-Nguyen and quasi conjunction of conditional events; (iv) entailment among conditional objects and QAND rule.
\subsection{Basic notions  on coherence} Given any event $E$ we use the same symbol to denote its indicator and we denote by $E^c$ the negation of $E$. Given any events $A$ and $B$, we simply write $A \subseteq B$ to denote that $A$ logically implies $B$. Moreover, we denote by $AB$ (resp., $A \vee B$) the logical intersection, or conjunction (resp., logical union, or disjunction) of $A$ and $B$.
We recall that $n$ events are said logically independent when there are no logical dependencies among them; this amounts to say that the number of atoms, or constituents, generated by them is $2^n$.
The conditional event $B|A$, with $A \neq \emptyset$, is looked at as a three-valued logical entity which is true, or false, or void, according to whether $AB$ is true, or $AB^c$ is true, or $A^c$ is true. Given a real function $P : \; \mathcal{F} \, \rightarrow \, \mathbb{R}$, where $\mathcal{F}$ is an arbitrary family of conditional events, let
us consider a subfamily $\mathcal{F}_n = \{E_1|H_1, \ldots, E_n|H_n\} \subseteq \mathcal{F}$,
and the vector $\mathcal{P}_n =(p_1, \ldots, p_n)$, where $p_i =
P(E_i|H_i) \, ,\;\; i = 1, \ldots, n$.
We denote by $\mathcal{H}_n$ the disjunction $H_1 \vee \cdots \vee H_n$.
Notice that, $E_iH_i \vee E_i^cH_i \vee H_i^c = \Omega$, $i = 1, \ldots,
n$, where $\Omega$ is the sure event; then, by expanding the expression
$\bigwedge_{i=1}^n(E_iH_i \vee E_i^cH_i \vee H_i^c)$,
we can represent $\Omega$ as the disjunction of $3^n$ logical
conjunctions, some of which may be impossible.  The remaining ones
are the constituents generated by the family $\mathcal{F}_n$. We denote by
$C_1, \ldots, C_m$ the constituents contained in $\mathcal{H}_n$ and (if
$\mathcal{H}_n \neq \Omega$) by $C_0$ the further constituent
$\mathcal{H}_n^c =
H_1^c \cdots H_n^c $, so that
\[
\mathcal{H}_n = C_1 \vee \cdots \vee C_m \,,\;\;\; \Omega =
\mathcal{H}_n^c \vee
\mathcal{H}_n = C_0 \vee C_1 \vee \cdots \vee C_m \,,\;\;\; m+1 \leq 3^n
\,.
\]
With the pair $(\mathcal{F}_n, \mathcal{P}_n$) we associate the random gain
${\mathcal{G}} = \sum_{i=1}^n s_iH_i(E_i - p_i)$,
where $s_1, \ldots, s_n$ are $n$ arbitrary real numbers. Let $g_h$
be the value of $\mathcal{G}$ when $C_h$ is true; of course $g_0 = 0$.
Denoting by ${\mathcal{G}}|\mathcal{H}_n$ the restriction of ${\mathcal{G}}$ to
$\mathcal{H}_n$, it is ${\mathcal{G}}|\mathcal{H}_n \in \{g_1, \ldots, g_m\}$.
Then, we have
\begin{definition} {\rm The function $P$ defined on $\mathcal{F}$ is {\em coherent} if and only if, for every integer $n$, for every finite sub-family $\mathcal{F}_n$
$\subseteq \mathcal{F}$ and for every $s_1, \ldots, s_n$, one has:
$\min \; {\mathcal{G}}|\mathcal{H}_n \; \leq 0 \leq \max \;
{\mathcal{G}}|\mathcal{H}_n$. }\end{definition}
From the previous definition it immediately follows that in order $P$ be coherent it must be $P(E|H) \in [0,1]$ for every $E|H \in \mathcal{F}$. If $P$ is coherent it is called a {\em conditional probability on $\mathcal{F}$} (see, e.g., \cite{CoSc02}). Given any family $\mathcal{F}^*$, with $\mathcal{F} \subset \mathcal{F}^*$, and any function $P^*$ defined on $\mathcal{F}^*$, assuming $P$ coherent, we say that $P^*$ is a coherent extension of $P$ if the following conditions are satisfied: (i) $P^*$ is coherent; (ii) the restriction of $P^*$ to $\F$ coincides with $P$, that is for every $E|H \in \mathcal{F}$ it holds that $P^*(E|H) = P(E|H)$. In particular, if $\F^*$ contains the set of unconditional events $\{EH, H : E|H \in \mathcal{F}\}$, then  for every $E|H \in \mathcal{F}$ the coherent extension $P^*$ satisfies  the compound probability theorem $P^*(EH)=P^*(H)P(E|H)$ and hence, when $P^*(H)>0$, we can represent $P(E|H)$ as the ratio $\frac{P^*(EH)}{P^*(H)}$.  \\
With each $C_h$ contained in  $\mathcal{H}_n$ we associate a point
$Q_h = (q_{h1}, \ldots, q_{hn})$, where $q_{hj} = 1$, or 0, or $p_j$, according to whether $C_h \subseteq E_jH_j$, or $C_h \subseteq E_j^cH_j$, or $C_h \subseteq H_j^c$.
Denoting by $\mathcal{I}$ the convex hull of  $Q_1, \ldots, Q_m$,
based on the penalty criterion, the result below can be proved (\cite{Gilio90,Gilio92}, see also \cite{GiSa11a}).

\begin{theorem}\label{CNES}{\rm
The function $P$ is coherent if and only if, for every finite subfamily $ \mathcal{F}_n \subseteq \mathcal{F}$, one has  $\mathcal{P}_n \in \mathcal{I}$. } \end{theorem}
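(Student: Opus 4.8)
The plan is to fix an arbitrary finite subfamily $\mathcal{F}_n \subseteq \mathcal{F}$ and prove the equivalence ``coherence restricted to $\mathcal{F}_n$'' $\iff$ ``$\mathcal{P}_n \in \mathcal{I}$'' for each $n$; since Definition~1 quantifies coherence over every finite subfamily and the conclusion of the theorem is likewise a universal statement over all $\mathcal{F}_n$, the full equivalence then follows by quantifying over $\mathcal{F}_n$. The first, essentially computational, step is to rewrite the gain at each constituent in geometric form. For a constituent $C_h \subseteq \mathcal{H}_n$ I would examine, for each $j$, the three mutually exclusive cases $C_h \subseteq E_jH_j$, $C_h \subseteq E_j^cH_j$, $C_h \subseteq H_j^c$ and check that the $j$-th summand of $\mathcal{G}$ equals $s_j(1-p_j)$, $-s_jp_j$, and $0$ respectively, matching $q_{hj}=1,0,p_j$. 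Hence $g_h = \sum_{j=1}^n s_j(q_{hj}-p_j) = \mathbf{s}\cdot(Q_h - \mathcal{P}_n)$ with $\mathbf{s}=(s_1,\ldots,s_n)$, so every gain value is the inner product of the direction $\mathbf{s}$ with the displacement of a vertex $Q_h$ from $\mathcal{P}_n$.

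For the direction ``$\mathcal{P}_n \in \mathcal{I} \Rightarrow$ coherence'', I would write $\mathcal{P}_n = \sum_{h=1}^m \mu_h Q_h$ as a convex combination ($\mu_h \geq 0$, $\sum_h \mu_h = 1$) and observe that, for every choice of $\mathbf{s}$, the same weights yield $\sum_{h=1}^m \mu_h g_h = \mathbf{s}\cdot\big(\sum_{h} \mu_h Q_h - \mathcal{P}_n\big) = 0$. A vanishing convex combination of the numbers $g_1,\ldots,g_m$ forces them to be neither all strictly positive nor all strictly negative, whence $\min \mathcal{G}|\mathcal{H}_n \leq 0 \leq \max \mathcal{G}|\mathcal{H}_n$; as $\mathbf{s}$ was arbitrary, coherence on $\mathcal{F}_n$ holds.

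The reverse direction carries the real work. Assuming $\mathcal{P}_n \notin \mathcal{I}$, I would invoke the separating hyperplane theorem: since $\mathcal{I}$ is the convex hull of finitely many points, hence a compact convex set not containing $\mathcal{P}_n$, there is a direction $\mathbf{s}$ strictly separating $\mathcal{P}_n$ from $\mathcal{I}$, and because separating the hull is the same as separating its generating vertices, the quantities $\mathbf{s}\cdot(Q_h - \mathcal{P}_n)$ all share one fixed strict sign for $h=1,\ldots,m$; after possibly replacing $\mathbf{s}$ by $-\mathbf{s}$ this gives $g_h > 0$ for every $h$, so $\min \mathcal{G}|\mathcal{H}_n > 0$, contradicting coherence. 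The main obstacle is the careful deployment of strict separation, namely ensuring the separating functional is realized by the finite parameter vector $\mathbf{s}$ and that the strict inequality holds simultaneously at all $m$ vertices, together with the bookkeeping that the constituent $C_0 = \mathcal{H}_n^c$ (for which $q_{0j}=p_j$ for all $j$, so that its associated point would coincide with $\mathcal{P}_n$) is correctly excluded: the restriction $\mathcal{G}|\mathcal{H}_n$ ranges only over $g_1,\ldots,g_m$ and $\mathcal{I}$ is built only from the vertices inside $\mathcal{H}_n$, so the equivalence is not trivialized. Combining the two directions for each $\mathcal{F}_n$ completes the proof.
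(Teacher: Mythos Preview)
Your argument is correct: the identity $g_h=\mathbf{s}\cdot(Q_h-\mathcal{P}_n)$ is exactly right, the convex-combination step gives the forward direction cleanly, and strict separation of a point from a polytope furnishes the stakes $\mathbf{s}$ witnessing incoherence in the reverse direction. Your remarks about excluding $C_0$ are also on target.

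As for comparison: the paper does not actually prove Theorem~\ref{CNES}; it only cites \cite{Gilio90,Gilio92,GiSa11a} and indicates that the result is obtained ``based on the penalty criterion''. That phrase points to a different route from yours: in the penalty (Brier-score) approach one works with the quadratic loss $L=\sum_i H_i(E_i-p_i)^2$, and coherence is characterized by the non-existence of an assessment $\mathcal{P}_n'$ that weakly dominates $\mathcal{P}_n$ in loss at every constituent and strictly at some; the convex-hull condition then emerges because $\mathcal{P}_n\notin\mathcal{I}$ allows the orthogonal projection of $\mathcal{P}_n$ onto $\mathcal{I}$ to serve as a dominating assessment. Your proof instead stays with the betting definition (Definition~1) and uses a single application of the separating-hyperplane theorem, which is arguably the most direct path from the definition actually stated in the paper to the conclusion. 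The penalty-criterion route has the advantage of simultaneously yielding the equivalence of the two coherence criteria; your route is shorter and fully self-contained relative to Definition~1.
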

The condition $\mathcal{P}_n \in \mathcal{I}$ amounts to solvability of the
following system $\Sigma$ in the unknowns $\lambda_1, \ldots,
\lambda_m$
\[
(\Sigma) \hspace{1 cm}
\left\{
\begin{array}{l}
\sum_{h=1}^m q_{hj} \lambda_h = p_j \; , \; \; j = 1, \ldots, n \,
; \\[0.5ex]
\sum_{h=1}^m \lambda_h = 1 \; , \; \; \lambda_h \geq 0 \, ,
\; h = 1, \ldots, m.
\end{array}
\right.
\]
{\em Checking coherence of the assessment $\mathcal{P}_n$ on $\mathcal{F}_n$.} \\
Let $S$ be the set of solutions $\Lambda = (\lambda_1, \ldots,
\lambda_m)$ of the system $\Sigma$. Then, define
\begin{equation}\label{PHI-I0}\begin{array}{l}
\Phi_j(\Lambda) = \Phi_j(\lambda_1, \ldots, \lambda_m) = \sum_{r :
C_r \subseteq H_j} \lambda_r \; , \; \; \; j = 1, \ldots, n \,;
\\
M_j  =  \max_{\Lambda \in S } \; \Phi_j(\Lambda) \; , \; \; \; j = 1, \ldots, n \,;
\;\;\; I_0  =  \{ j \, : \, M_j=0 \} \,.
\end{array}\end{equation}
Notice that $I_0 \subset \{1, \ldots,
n\}$, where $\subset$ means strict inclusion. We denote by $(\mathcal{F}_0, \mathcal{P}_0)$ the pair associated with $I_0$, that is $\mathcal{F}_0=\{E_j|H_j\in \mathcal{F}_n: j\in I_0\}$ and $\mathcal{P}_0=(p_j: j\in I_0)$.
Given the pair $(\mathcal{F}_n,\mathcal{P}_n)$ and a subset $J \subset \{1,
\ldots, n\}$, we denote by $(\mathcal{F}_J, \mathcal{P}_J)$ the pair associated with
$J$ and by $\Sigma_J$ the corresponding system.
We observe that $\Sigma_J$ is solvable if and only if $\mathcal{P}_J  \in \mathcal{I}_J$,
where $\mathcal{I}_J$ is the convex hull associated with the pair $( \mathcal{F}_J,
\mathcal{P}_J)$. Then, we have (\cite{Gili93,Gili95}, see also \cite{BiGS03})
\begin{theorem}\label{GILIO-93}{\rm
Given a probability assessment $\mathcal{P}_n$ on the family $\mathcal{F}_n$, if
the system $\Sigma$ associated with $(\mathcal{F}_n,\mathcal{P}_n)$ is solvable, then for every $J\subset \{1,\ldots,n\}$, such that $J\setminus I_0\neq \emptyset$, the system $\Sigma_J$ associated with $(\mathcal{F}_J,\mathcal{P}_J)$ is solvable too.}
\end{theorem}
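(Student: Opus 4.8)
The plan is to exploit the probabilistic reading of the system $\Sigma$ and then to produce an explicit solution of $\Sigma_J$ by conditioning. First I would record the standard reformulation: a solution $\Lambda=(\lambda_1,\dots,\lambda_m)$ of $\Sigma$ satisfies $\sum_{h=1}^m\lambda_h=1$ with $\lambda_h\ge 0$, so it is a probability distribution over the constituents $C_1,\dots,C_m$, and the equation $\sum_h q_{hj}\lambda_h=p_j$ is equivalent, after subtracting $p_j\sum_h\lambda_h=p_j$ and using the partition of the constituents in $\mathcal{H}_n$ into those contained in $H_j$ and those contained in $H_j^c$, to the compound-probability identity $\sum_{C_h\subseteq E_jH_j}\lambda_h=p_j\,\Phi_j(\Lambda)$, where $\Phi_j(\Lambda)=\sum_{C_r\subseteq H_j}\lambda_r$ is the mass that $\Lambda$ assigns to $H_j$. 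Thus $j\in I_0$ says exactly that every solution gives zero mass to $H_j$, whereas $M_j>0$ (i.e. $j\notin I_0$) guarantees a solution charging $H_j$.

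The second step is to build one solution that simultaneously charges all conditioning events outside $I_0$. Let $K=\{1,\dots,n\}\setminus I_0$; by the remark preceding the statement $I_0$ is a strict subset, so $K\neq\emptyset$. For each $j\in K$ choose $\Lambda^{(j)}\in S$ with $\Phi_j(\Lambda^{(j)})=M_j>0$, and set $\Lambda^*=\frac{1}{|K|}\sum_{j\in K}\Lambda^{(j)}$. The set $S$ is the intersection of an affine subspace with the probability simplex, hence convex, so $\Lambda^*\in S$; and since each $\Phi_i$ is linear and nonnegative on $S$, for every $j\in K$ we get $\Phi_j(\Lambda^*)\ge\frac{1}{|K|}\Phi_j(\Lambda^{(j)})=\frac{M_j}{|K|}>0$. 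Hence $\Lambda^*$ assigns strictly positive mass to $H_j$ for every $j\notin I_0$.

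The third step is the conditioning construction. Fix $J$ with $J\setminus I_0\neq\emptyset$, pick $j_0\in J\setminus I_0$, write $\mathcal{H}_J=\bigvee_{j\in J}H_j$ and $\sigma=\sum_{C_h\subseteq\mathcal{H}_J}\lambda^*_h$; then $\sigma\ge\Phi_{j_0}(\Lambda^*)>0$, which is precisely where the hypothesis $J\setminus I_0\neq\emptyset$ enters. Each nonempty constituent $C_h\subseteq\mathcal{H}_J$ lies in exactly one constituent $C'_k$ of $\mathcal{F}_J$, so I would define $\mu_k=\frac{1}{\sigma}\sum_{C_h\subseteq C'_k}\lambda^*_h$ for each $\mathcal{F}_J$-constituent $C'_k\subseteq\mathcal{H}_J$, and claim $\mu=(\mu_1,\dots,\mu_{m'})$ solves $\Sigma_J$. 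Nonnegativity and $\sum_k\mu_k=1$ are immediate from the aggregation. For the remaining equations, the key bookkeeping identity is that, for $j\in J$, the conditions $C'_k\subseteq E_jH_j$ and $C_h\subseteq C'_k$ hold together iff $C_h\subseteq E_jH_j$ (and every such $C_h$ with $j\in J$ automatically lies in $\mathcal{H}_J$); summing then gives $\sum_{C'_k\subseteq E_jH_j}\mu_k=\frac{1}{\sigma}\sum_{C_h\subseteq E_jH_j}\lambda^*_h$ and $\Phi_j^{(J)}(\mu)=\frac{\Phi_j(\Lambda^*)}{\sigma}$, where $\Phi_j^{(J)}$ is the analogue of $\Phi_j$ for $\mathcal{F}_J$. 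Combining with the compound-probability identity for $\Lambda^*$ yields $\sum_{C'_k\subseteq E_jH_j}\mu_k=p_j\,\Phi_j^{(J)}(\mu)$ for every $j\in J$, which by the reformulation of the first step is exactly $\Sigma_J$; hence $\mu$ is a solution of $\Sigma_J$ and $\Sigma_J$ is solvable.

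The main obstacle I anticipate is the second step: solvability of $\Sigma$ only yields, for each individual $j\notin I_0$, a possibly different solution charging $H_j$, whereas the conditioning in the third step genuinely needs a single solution charging all relevant conditioning events at once so that $\sigma>0$; this is resolved by the convexity of $S$ together with the nonnegativity of the functionals $\Phi_j$. The secondary technical point is the constituent-aggregation identity relating the atoms of $\mathcal{F}_n$ to those of $\mathcal{F}_J$, which must be stated carefully but is otherwise routine.
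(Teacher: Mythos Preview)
The paper does not actually prove this theorem; it merely states it and cites \cite{Gili93,Gili95} (see also \cite{BiGS03}). So there is no in-paper proof to compare against.

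That said, your argument is correct and is essentially the standard one. The reformulation of the $j$-th equation of $\Sigma$ as $\sum_{C_h\subseteq E_jH_j}\lambda_h=p_j\,\Phi_j(\Lambda)$ is the right way to see what is going on, and the conditioning construction in the third step is exactly how one passes from a solution for $\mathcal{F}_n$ to a solution for $\mathcal{F}_J$. One small simplification: the second step is slightly stronger than you need. For a \emph{fixed} $J$ with $J\setminus I_0\neq\emptyset$, it suffices to pick any single $j_0\in J\setminus I_0$ and use directly a solution $\Lambda$ with $\Phi_{j_0}(\Lambda)>0$ (which exists since $M_{j_0}>0$); then $\sigma\ge\Phi_{j_0}(\Lambda)>0$ and the conditioning goes through. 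Your convex combination over all of $K=\{1,\dots,n\}\setminus I_0$ produces a single $\Lambda^*$ that works uniformly for every admissible $J$, which is a nice feature but not required by the statement. Either way the proof is complete.
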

By the previous results, we obtain
\begin{theorem}{\rm The assessment $\mathcal{P}_n$ on $\mathcal{F}_n$ is coherent if and only if the following conditions are satisfied: (i)
$\mathcal{P}_n \in \mathcal{I}$; (ii) if $I_0 \neq \emptyset$, then $\mathcal{P}_0$ is coherent.
}\end{theorem}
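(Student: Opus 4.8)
The plan is to reduce everything to Theorems~\ref{CNES} and~\ref{GILIO-93}. Recall that, by Theorem~\ref{CNES}, coherence of the assessment $\mathcal{P}_n$ on $\mathcal{F}_n$ is equivalent to the condition $\mathcal{P}_J \in \mathcal{I}_J$ (i.e.\ solvability of $\Sigma_J$) for every nonempty $J \subseteq \{1,\ldots,n\}$. Thus the whole argument amounts to showing that conditions (i) and (ii) are precisely what is needed to secure solvability of $\Sigma_J$ for all such $J$, and I would organize it as the two implications of the stated equivalence.

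For the necessity part, I would argue as follows. If $\mathcal{P}_n$ is coherent, then applying Theorem~\ref{CNES} to the full family $\mathcal{F}_n$ (i.e.\ $J=\{1,\ldots,n\}$) gives $\mathcal{P}_n \in \mathcal{I}$, which is (i). For (ii), observe that $\mathcal{F}_0$ is a subfamily of $\mathcal{F}_n$ and that coherence is inherited by subfamilies, since the definition of coherence quantifies over every finite subfamily; hence, whenever $I_0 \neq \emptyset$, the restriction $\mathcal{P}_0$ of $\mathcal{P}_n$ to $\mathcal{F}_0$ is coherent.

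For the sufficiency part, I would assume (i) and (ii) and verify solvability of $\Sigma_J$ for an arbitrary nonempty $J \subseteq \{1,\ldots,n\}$ by a dichotomy on the position of $J$ relative to $I_0$. If $J \setminus I_0 \neq \emptyset$, then since (i) guarantees that $\Sigma$ is solvable, Theorem~\ref{GILIO-93} immediately yields solvability of $\Sigma_J$. If instead $J \subseteq I_0$ (which forces $I_0 \neq \emptyset$), then $\mathcal{F}_J$ is a subfamily of $\mathcal{F}_0$; invoking (ii), $\mathcal{P}_0$ is coherent, so Theorem~\ref{CNES} applied to the coherent pair $(\mathcal{F}_0,\mathcal{P}_0)$ gives $\mathcal{P}_J \in \mathcal{I}_J$, i.e.\ solvability of $\Sigma_J$. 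In both cases $\Sigma_J$ is solvable, so by Theorem~\ref{CNES} the assessment $\mathcal{P}_n$ is coherent.

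The main obstacle, and the only place where real care is required, is the sufficiency direction and specifically the case split: conditions (i) and (ii) must together cover \emph{all} subfamilies. Theorem~\ref{GILIO-93} is tailored exactly to the subfamilies that retain at least one index outside $I_0$, so the indices inside $I_0$ are the ones that could fail, and this is precisely what the coherence of $\mathcal{P}_0$ repairs. A subtle point worth checking is that the convex hull $\mathcal{I}_J$, and hence solvability of $\Sigma_J$, depends only on the pair $(\mathcal{F}_J,\mathcal{P}_J)$ itself; it is therefore unambiguous whether we regard $\mathcal{F}_J$ as sitting inside $\mathcal{F}_n$ or inside $\mathcal{F}_0$, which is what makes the two cases combine cleanly.
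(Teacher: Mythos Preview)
Your proof is correct and follows essentially the same route the paper intends: the paper does not give an explicit proof here but simply writes ``By the previous results, we obtain'' before stating the theorem, indicating that it is an immediate consequence of Theorems~\ref{CNES} and~\ref{GILIO-93}. Your argument spells out precisely that reduction---necessity by restriction to subfamilies, sufficiency by the dichotomy $J \setminus I_0 \neq \emptyset$ versus $J \subseteq I_0$---and is exactly the intended derivation.
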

Then, we can check coherence by the following procedure:
\begin{algorithm}\label{Alg1}
{\rm Let  the pair $(\mathcal{F}_n,\mathcal{P}_n)$ be given.
\begin{enumerate}
\item Construct the system $\Sigma$  and check its solvability.
\item If the system $\Sigma$ is not solvable then $\mathcal{P}_n$ is
not coherent and the procedure stops, otherwise compute the set
$I_0$.
\item If $I_0 = \emptyset$ then $\mathcal{P}_n$ is coherent and the procedure
stops; otherwise set $(\mathcal{F}_n, \mathcal{P}_n) = (\mathcal{F}_0, \mathcal{P}_0)$ and repeat
steps 1-3.
\end{enumerate}
}\end{algorithm}
We remark  that, in the algorithm, $\Sigma$ is initially the system associated with $(\mathcal{F}_n,\mathcal{P}_n)$; after the first cycle $\Sigma$ is the system associated with $(\mathcal{F}_0,\mathcal{P}_0)$, and so on. If, after $k+1$ cycles, the algorithm stops at step 2 because  $\Sigma$ is unsolvable, then denoting by $(\mathcal{F}_k,\mathcal{P}_k)$ the pair associated with $\Sigma$, we have that $\mathcal{P}_k$ is not coherent and, of course, $\mathcal{P}_n$ is not coherent too.
\subsection{Basic notions on probabilistic default reasoning} We now give in the setting of coherence the notions of p-consistency and p-entailment of Adams (\cite{Adam75}). Given a family of $n$ conditional events $\mathcal{F}_n = \{E_i|H_i \, , \; i=1,\ldots,n\}$, we define below the notions of p-consistency and p-entailment for $\mathcal{F}_n$.
\begin{definition}\label{PC}{\rm
The family of conditional events $\mathcal{F}_n = \{E_i|H_i \, , \; i=1,\ldots,n\}$ is \linebreak {\em p-consistent} if and only if, for every set of lower bounds $\{\alpha_i,
i=1,\ldots,n\}$, with $\alpha_i \in [0,1)$, there exists a coherent probability
assessment $\{p_i, i=1,\ldots,n\}$ on $\mathcal{F}_n$, with $p_i =
P(E_i|H_i)$, such that $p_i \geq \alpha_i, i=1,\ldots,n$. }
\end{definition}

\begin{remark} \label{REM-PCONSIST}
Notice that p-consistency of $\mathcal{F}_n$ can be introduced by an equivalent condition, as shown by the result below (\cite[Thm 4.5]{2002BGLS-JANCL},\cite[Thm 8]{Gilio2002}).
\end{remark}
\begin{theorem}\label{THM-S-PCONSIST}
A family of conditional events $\F_n$ is p-consistent if and only if the assessment $(p_1,p_2,\ldots,p_n)=(1,1,\ldots,1)$ on $\mathcal{F}_n$ is coherent.
\end{theorem}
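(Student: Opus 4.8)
The plan is to prove the two implications separately, treating the backward direction as essentially immediate and concentrating the real work on the forward one.

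For the ``if'' part, assume the assessment $(1,\ldots,1)$ is coherent. Given any lower bounds $\alpha_i\in[0,1)$, I would simply exhibit the assessment $p_i=1$, $i=1,\ldots,n$: it is coherent by hypothesis, and since $\alpha_i<1=p_i$ we have $p_i\geq\alpha_i$ for every $i$. Thus the defining condition of p-consistency in Definition~\ref{PC} is met with no further work.

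For the ``only if'' part, assume $\mathcal{F}_n$ is p-consistent. Applying the definition to the lower bounds $\alpha_i^{(k)}=1-\frac{1}{k}$, for each $k$ I obtain a coherent assessment $\mathcal{P}^{(k)}=(p_1^{(k)},\ldots,p_n^{(k)})$ with $p_i^{(k)}\geq 1-\frac{1}{k}$; since coherence forces $p_i^{(k)}\in[0,1]$, we get $\mathcal{P}^{(k)}\to(1,\ldots,1)$ as $k\to\infty$. I would then establish coherence of the all-ones assessment by checking, via Theorem~\ref{CNES}, that its restriction to every subfamily $\mathcal{F}_J$ lies in the corresponding convex hull $\mathcal{I}_J$. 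Fix $J$. Since $\mathcal{P}^{(k)}$ is coherent on $\mathcal{F}_n$, its restriction $\mathcal{P}_J^{(k)}$ is coherent on $\mathcal{F}_J$, so by Theorem~\ref{CNES} the system $\Sigma_J$ relative to $\mathcal{P}_J^{(k)}$ admits a solution $\Lambda^{(k)}$ in the (compact) standard simplex.

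The core of the argument is a limiting step. Passing to a convergent subsequence $\Lambda^{(k)}\to\Lambda^*$ by compactness of the simplex, I would let $k\to\infty$ in the constraints of $\Sigma_J$. Because each coordinate $q_{hj}$ equals $1$, $0$, or $p_j^{(k)}$, those constraints are bilinear, hence continuous, in $(\mathcal{P}_J^{(k)},\Lambda^{(k)})$; using $p_j^{(k)}\to 1$, the limiting relations say precisely that $\Lambda^*$ assigns zero mass to every constituent contained in some $E_j^cH_j$, which is exactly the statement that the all-ones vector belongs to $\mathcal{I}_J$. As $J$ was arbitrary, Theorem~\ref{CNES} yields coherence of $(1,\ldots,1)$, completing the proof. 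The hard part will be the bookkeeping in this limit: one must track that both the point being tested and the generating vertices $Q_h$ depend on $\mathcal{P}^{(k)}$, and verify carefully that the limit $\Lambda^*$ solves the system associated with the all-ones assessment rather than merely some nearby one. Equivalently, this step amounts to showing that the set of coherent assessments on $\mathcal{F}_n$ is closed, a fact one could alternatively invoke directly.
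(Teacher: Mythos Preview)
Your argument is sound. The backward direction is indeed trivial, and the forward direction correctly exploits p-consistency to produce a sequence of coherent assessments converging to $(1,\ldots,1)$, then passes to the limit via compactness of the simplex. The delicate point you flag---that the vertices $Q_h$ themselves move with the assessment because $q_{hj}=p_j^{(k)}$ when $C_h\subseteq H_j^c$---is handled correctly: writing the $j$-th constraint of $\Sigma_J$ as $\sum_{C_h\subseteq E_jH_j}\lambda_h = p_j^{(k)}\sum_{C_h\subseteq H_j}\lambda_h$ and letting $p_j^{(k)}\to 1$ along a convergent subsequence $\Lambda^{(k)}\to\Lambda^*$ gives exactly $\sum_{C_h\subseteq E_j^cH_j}\lambda_h^*=0$, which is the system for the all-ones vector. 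Quantifying over all subfamilies $J$ and invoking Theorem~\ref{CNES} is the right way to close the argument; equivalently, as you note, this is just closedness of the set of coherent assessments.

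As for comparison with the paper: the paper does not supply its own proof of this statement but defers to external references (\cite[Thm~4.5]{2002BGLS-JANCL}, \cite[Thm~8]{Gilio2002}). Your compactness-and-limit argument is a natural self-contained route and is in the spirit of those references, which likewise rely on the topological structure of the coherent set.
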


\begin{definition}\label{PE}{\rm
A p-consistent family  $\mathcal{F}_n = \{E_i|H_i \, , \; i=1,\ldots,n\}$ {\em p-entails}  $B|A$, denoted $\mathcal{F}_n \; \Rightarrow_p \; B|A$, if and only if  there exists a nonempty subset, of  $\mathcal{F}_n$, $\mathcal{S} = \{E_i|H_i,\; i \in J\}$ with  $J \subseteq \{1,\ldots,n\}$,  such that, for every $\alpha \in [0,1)$, there
exists a set  $\{\alpha_i, i \in J\}$, with $\alpha_i \in [0,1)$,
such that for all coherent assessments $(z, p_i, i \in J)$  on $\{B|A, E_i|H_i \,, \; i \in J\}$, with $z = P(B|A)$ and $p_i = P(E_i|H_i)$, if  $p_i
\geq \alpha_i$ for every  $i \in J$, then $z \geq \alpha$. }
\end{definition}
As we show in Theorem \ref{ENTAIL-CS}, a p-consistent family $\mathcal{F}_n$ p-entails $B|A$ if and only if, given any coherent assessment $(p_1,\ldots,p_n,z)$ on $\mathcal{F}_n \cup \{B|A\}$, from the condition $p_1=\cdots=p_n=1$  it follows $z=1$ (see also \cite[Thm 4.9]{2002BGLS-JANCL}). Of course, when $\mathcal{F}_n$ p-entails $\{B|A\}$, there may be coherent assessments $(p_1,\ldots,p_n,z)$ with $z \neq 1$, but in such case $p_i \neq 1$ for at least one index $i$.

We give below the notion of p-entailment between two families of conditional events $\Gamma$ and $\mathcal{F}$.
\begin{definition}\label{ENTAIL-FAM}{\rm
Given two p-consistent finite families of conditional events $\mathcal{F}$ and $\Gamma$, we say that $\mathcal{F}$ p-entails $\Gamma$ if $\mathcal{F}$ p-entails $E|H$, for every $E|H \in \Gamma$.
}\end{definition}
\noindent {\em Transitive property:} Of course, p-entailment is transitive; that is, given three p-consistent families of conditional events $\mathcal{F}, \Gamma, \mathcal{U}$, if $\mathcal{F} \; \Rightarrow_p \; \Gamma$ and $\Gamma \; \Rightarrow_p \; \mathcal{U}$, then $\mathcal{F} \; \Rightarrow_p \; \mathcal{U}$.
\begin{remark}\label{ENTAIL-SUB} Notice that, from Definition \ref{PE}, we trivially have that $\mathcal{F}$ p-entails $E|H$, for every $E|H \in \mathcal{F}$; then, by Definition \ref{ENTAIL-FAM}, it immediately follows
\begin{equation}
\mathcal{F} \; \Rightarrow_p \; \mathcal{S} \;,\;\; \forall \, \mathcal{S} \subseteq \mathcal{F} \,,\; \mathcal{S} \neq \emptyset \,.
\end{equation}
\end{remark}
\subsection{Quasi conjunction and inclusion relation} We recall below the notion of quasi conjunction of conditional events.
\begin{definition}{\rm
Given any events $A, H$, $B, K$, with $H \neq
\emptyset, K \neq \emptyset$, the quasi conjunction of the
conditional events $A|H$ and $B|K$, as defined in \cite{Adam75}, is
the conditional event $\mathcal{C}(A|H,B|K) = (AH \vee H^c) \wedge (BK \vee K^c)|(H \vee K)$, or equivalently $\mathcal{C}(A|H,B|K) = (AHBK \vee AHK^c \vee H^cBK)|(H \vee K)$.
More in general, given a family of $n$ conditional events $\mathcal{F}_n=\{E_i|H_i,\, i=1,\ldots,n\}$, the quasi conjunction of the conditional events in $\mathcal{F}_n$ is the conditional event  \[
\mathcal{C}(\mathcal{F}_n)=\mathcal{C}(E_1|H_1,\ldots,E_n|H_n) = \bigwedge_{i=1}^n (E_iH_i\vee H_i^c)\big |(\bigvee_{i=1}^n H_i) \,.
\]
}\end{definition}
The operation of quasi conjunction is associative; that is, for every subset $J \subset \{1,\ldots,n\}$, defining $\Gamma = \{1,\ldots,n\} \setminus J$ and
\[
\mathcal{F}_J = \{E_j|H_j \in \F_n : j \in J\} \,,\;\; \mathcal{F}_{\Gamma} = \{E_r|H_r \in \F_n : r \in \Gamma\} \,,
\]
it holds that $\mathcal{C}(\mathcal{F}_n) = \mathcal{C}(\mathcal{F}_J \cup \mathcal{F}_{\Gamma}) = \mathcal{C}[\mathcal{C}(\mathcal{F}_J), \mathcal{C}(\mathcal{F}_{\Gamma})]$. \\
If $A, H, B, K$ are logically independent, then we have (\cite{Gilio2004}): \\ (i) the probability assessment $(x,y)$ on the family $\{A|H, B|K\}$ is coherent for every $(x,y) \in [0,1]^2$; \\ (ii) given a coherent
assessment $(x,y)$ on $\{A|H, B|K\}$, the probability assessment
$(x,y,z)$ on $\{A|H, B|K, \mathcal{C}(A|H,B|K)\}$, where $z = P[\mathcal{C}(A|H,B|K)]$, is a coherent extension of $(x,y)$ if and only if: $l \leq z \leq u$, where
\[
l = \mbox{max}(x+y-1,0) \,;\;\;\;\; u = \left\{\begin{array}{ll} \frac{x+y-2xy}{1-xy}, & max \, \{x,y\} < 1, \\ 1, & max \, \{x,y\} = 1.
\end{array} \right.
\]
We observe that $l=T_L(x,y)$ and $u=S_0^H(x,y)$, where $T_L$ is the Lukasiewicz t-norm and $S_0^H$ is the Hamacher t-conorm with parameter $\lambda=0$.
A general probabilistic analysis for quasi conjunction has been given in \cite{GiSa10,GiSa11}. \\ Logical operations of 'conjunction', 'disjunction' and 'iterated conditioning' for conditional events have been introduced in \cite{GiSa12b}. \\
The notion of logical inclusion among events has been generalized to conditional events by Goodman and Nguyen in \cite{GoNg88}. We recall below this notion.
\begin{definition}{\rm Given two conditional events $A|H$ and $B|K$, we say that $A|H$ implies $B|K$, denoted by $A|H \subseteq B|K$, if and only if $AH$ {\em true} implies $BK$ {\em true} and $B^cK$ {\em true} implies $A^cH$ {\em true}; that is
\[
A|H \subseteq B|K \; \Longleftrightarrow \; AH \subseteq BK \; \mbox{and} \; B^cK \subseteq A^cH \,.
\]
}\end{definition}
Given any conditional events $A|H, B|K$, we have
\[
A|H = B|K  \; \Longleftrightarrow \; A|H \subseteq B|K \; \mbox{and}  \; B|K \subseteq A|H \,;
\]
that is: $A|H = B|K  \; \Longleftrightarrow \; AH = BK \; \mbox{and}  \; H = K$. Moreover
\begin{equation}\label{NOT-GN}
\begin{array}{l}
A|H \subseteq B|K \Longleftrightarrow AHB^cK=H^cB^cK=AHK^c=\emptyset\,, \\
 \vspace{-0.2cm} \\ A|H \nsubseteq B|K \; \Longleftrightarrow \; AHB^cK \vee H^cB^cK \vee AHK^c \neq \emptyset\,.
\end{array}
\end{equation}
The inclusion relation among conditional events, with an extension to conditional random quantities, has been recently studied in \cite{PeVi12}.
\subsection{Conditional Objects}
Based on a three-valued calculus of conditional objects (which are a qualitative counterpart of conditional probabilities) a logic for nonmonotonic reasoning has been proposed by Dubois and Prade in \cite{DuboisPrade1994} (see also \cite{BeDP97}). Such a  three-valued semantics of conditional objects was first proposed for conditional events in \cite{deFi35}. The conditional object associated with a pair $(p,q)$ of Boolean propositions is denoted by $q|p$, which reads "$q$ given $p$", and concerning logical operations we can look at conditional objects as conditional events because the three-valued semantics is the same. In particular, the quasi conjunction of conditional objects exactly corresponds to quasi conjunction of conditional events and the logical entailment $\models$ among conditional objects corresponds to the inclusion relation $\subseteq$ among conditional events. In the nonmonotonic logic developed by Dubois and Prade the quasi conjunction plays a key role, as is shown by the following definition of entailment of a conditional object from a finite conditional knowledge base $K = \{q_i|p_i, i=1,\ldots,n\}$ \linebreak (see \cite{DuboisPrade1994}, Def. 1).
\begin{definition}\label{ENT-DP}{\rm
$K$ entails a conditional object $q|p$, denoted $K \models q|p$, if and only if either there exists a nonempty subset $\mathcal{S}$ of $K$ such that for the quasi conjunction $\mathcal{C}(\mathcal{S})$ it holds that  $\mathcal{C}(\mathcal{S}) \models q|p$, or $p \models q$.
}\end{definition}
Then, assuming $K$ finite, the following inference rule (\cite{Adam75,BeDP97,DuboisPrade1994}) follows:
\[
\begin{array}{llll}
\mbox{(QAND)} &  \hspace{3cm} & K  \;\models  \; \mathcal{C}(K) \;. &\hspace{4cm}
\end{array}
\]

\section{Probabilistic Entailment and Quasi Conjunction}

The next result, related to Adams' work, generalizes Theorem 6 given in \cite{GiSa10} and deepens the probabilistic semantics of the QAND rule in the framework of coherence.
\begin{theorem}\label{ENT-QC}{\rm
Given a p-consistent family of conditional events $\mathcal{F}_n$, for every nonempty subfamily $\mathcal{S}=\{E_i|H_i, i=1,\ldots,s\} \subseteq \mathcal{F}_n$, we have
\begin{equation}\label{ENTAIL-QCSUB}
(a) \;\; QAND: \;\;\; \mathcal{S} \;\Rightarrow_p \; \mathcal{C}(\mathcal{S}) \,;\;\;\;\;\;\;
(b) \;\; \mathcal{F}_n \;\Rightarrow_p \; \mathcal{C}(\mathcal{S}) \,.
\end{equation}
}\end{theorem}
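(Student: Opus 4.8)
I would prove (b) as an immediate corollary of (a). Since a family p-entails each of its nonempty subfamilies (Remark~\ref{ENTAIL-SUB}), we have $\mathcal{F}_n \Rightarrow_p \mathcal{S}$; once (a) supplies $\mathcal{S} \Rightarrow_p \mathcal{C}(\mathcal{S})$, the transitivity of p-entailment yields $\mathcal{F}_n \Rightarrow_p \mathcal{C}(\mathcal{S})$. Here $\mathcal{S}$ is p-consistent because coherence is inherited by subfamilies and $\mathcal{F}_n$ is p-consistent, i.e. $(1,\dots,1)$ is coherent on $\mathcal{F}_n$ by Theorem~\ref{THM-S-PCONSIST}; and $\{\mathcal{C}(\mathcal{S})\}$ turns out to be p-consistent as a by-product of the argument for (a). So the whole content is in part (a).

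For (a) I would use the reformulation of p-entailment recalled after Definition~\ref{PE} (established in Theorem~\ref{ENTAIL-CS}): it suffices to show that \emph{every} coherent assessment $(p_1,\dots,p_s,z)$ on $\mathcal{S}\cup\{\mathcal{C}(\mathcal{S})\}$ with $p_1=\cdots=p_s=1$ forces $z=P(\mathcal{C}(\mathcal{S}))=1$. Write $H=\bigvee_{i=1}^s H_i$ and $A_i=E_iH_i\vee H_i^c$, so that $A_i^c=E_i^cH_i$ and $\mathcal{C}(\mathcal{S})=C|H$ with $C=\bigwedge_{i=1}^s A_i$ and $C^cH=\bigvee_{i=1}^s E_i^cH_i$. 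The naive route is to pass to a coherent extension on the unconditional events: from $P(E_i|H_i)=1$ one gets $P^*(E_i^cH_i)=0$, hence $P^*(C^cH)\le\sum_i P^*(E_i^cH_i)=0$ and $z=P^*(CH)/P^*(H)=1$; but this breaks down exactly when $P^*(H)=0$, and handling that zero-probability layer is the crux.

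To settle it rigorously I would invoke the convex-structure characterization of coherence (Theorem~\ref{CNES}). List the constituents $C_h\subseteq H$ and their points $Q_h=(q_{h1},\dots,q_{hs},q_{h,s+1})$. Since every $p_i=1$, for $j\le s$ the coordinate $q_{hj}$ equals $0$ when $C_h\subseteq E_j^cH_j$ and equals $1$ otherwise, while $q_{h,s+1}\in\{0,1\}$. Coherence requires $(1,\dots,1,z)=\sum_h\lambda_hQ_h$ with $\lambda_h\ge0$ and $\sum_h\lambda_h=1$. As the $j$-th coordinate must reach its maximal value $1$, every constituent with $\lambda_h>0$ must satisfy $C_h\not\subseteq E_j^cH_j$ for all $j$. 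The key lemma is then a constituent-by-constituent check: if $C_h\subseteq H$ and $C_h\not\subseteq E_j^cH_j$ for every $j$, then $C_h\subseteq A_j$ for every $j$ (if $C_h\subseteq H_j^c$ this is clear; if $C_h\subseteq H_j$ then, avoiding $E_j^cH_j$, necessarily $C_h\subseteq E_jH_j\subseteq A_j$), whence $C_h\subseteq CH$ and $q_{h,s+1}=1$. Consequently $z=\sum_h\lambda_h q_{h,s+1}=\sum_h\lambda_h=1$, as required; this also exhibits $z=1$ as coherent, so $\{\mathcal{C}(\mathcal{S})\}$ is p-consistent.

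The main obstacle, as flagged, is the zero-layer case $P(H)=0$: the ratio argument is vacuous there, and the clean way around it is precisely the convex-hull/penalty characterization, which encodes coherence without dividing by $P(H)$. The remaining work — identifying $A_i^c=E_i^cH_i$ and $C^cH=\bigvee_i E_i^cH_i$, and the case analysis showing the surviving constituents all make $\mathcal{C}(\mathcal{S})$ true — is routine Boolean bookkeeping. If one prefers to avoid citing Theorem~\ref{ENTAIL-CS}, the same constituent analysis yields, for fixed lower bounds $\alpha_i$ close to $1$, a lower bound on $z$ tending to $1$, matching Definition~\ref{PE} directly.
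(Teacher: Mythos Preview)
Your treatment of (b) coincides with the paper's: both use Remark~\ref{ENTAIL-SUB} together with transitivity of p-entailment. For (a), however, your route is genuinely different. The paper argues directly from Definition~\ref{PE}: it invokes the lower bound $z\ge p_1+\cdots+p_s-(s-1)$ for $P[\mathcal{C}(\mathcal{S})]$ established in \cite{GiSa10,GiSa11} (the Lukasiewicz-type inequality), so that any choice of $\delta_i>0$ with $\sum_i\delta_i\le\varepsilon$ forces $z\ge 1-\varepsilon$ whenever $p_i\ge 1-\delta_i$. You instead reduce to the extreme case $p_1=\cdots=p_s=1$ via the characterization in Theorem~\ref{ENTAIL-CS}, and settle that case by a constituent analysis based only on Theorem~\ref{CNES}. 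Your argument there is correct and elegant: the key observation that every constituent $C_h\subseteq H$ avoiding all $E_j^cH_j$ must lie in $CH$ is exactly right. The gain is a self-contained proof using only the convex-hull machinery already in the paper; the paper's version, by contrast, imports an external quantitative bound but delivers an explicit inequality.

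There is one logical wrinkle you should not gloss over. Within this paper, invoking Theorem~\ref{ENTAIL-CS} to justify ``it suffices that $(1,\dots,1,z)$ coherent forces $z=1$'' is circular: the implication $5.\Rightarrow 1.$ in the proof of Theorem~\ref{ENTAIL-CS} appeals precisely to Theorem~\ref{ENT-QC}. You acknowledge this and claim the constituent analysis extends to $p_i$ near~$1$; that claim is correct but should be written out rather than asserted. Concretely, with $\mu_j=\sum_{h:\,C_h\subseteq E_j^cH_j}\lambda_h$, the $j$-th coordinate equation $p_j=\sum_h\lambda_h q_{hj}$ yields $\mu_j\le 1-p_j$, and since $C^cH=\bigvee_j E_j^cH_j$ one gets $1-z\le\sum_j\mu_j\le\sum_j(1-p_j)$, i.e.\ $z\ge\sum_j p_j-(s-1)$. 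This is exactly the paper's bound, now obtained without the external citation; once you include this computation, your proof is complete and arguably more elementary than the original.
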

\begin{proof} (a) We preliminarily observe that the p-consistency of $\mathcal{F}_n$ implies the p-consistency of $\mathcal{S}$. In order to prove that $\mathcal{S}$ p-entails $\mathcal{C}(\mathcal{S})$ we have to show that  for every $\varepsilon \in (0,1]$ there exist $\delta_1 \in (0,1],\, \ldots, \delta_s \in (0,1]$ such that, for every coherent assessment $(p_1,\ldots,p_s, z)$ on $\mathcal{S} \cup \{\mathcal{C}(\mathcal{S})\}$, where $p_i = P(E_i|H_i)$, $z=P(\mathcal{C}(\mathcal{S}))$, if $p_1 \geq 1 - \delta_1, \ldots, p_s \geq 1 - \delta_s$,  then $z \geq 1 - \varepsilon$. \\ We distinguish two cases: (i) the events $E_i,H_i, i=1,\ldots,s$, are logically independent; (ii) the events $E_i,H_i, i=1,\ldots,s$, are logically dependent. \\
(i) We recall that the case $s=2$, with $\mathcal{S}=\{E_1|H_1, E_2|H_2\}$, has been already examined in \cite{Gilio2004}, by observing that, given any
coherent assessment $(p_1,p_2,z)$ on the family $\{E_1|H_1, E_2|H_2, \mathcal{C}(E_1|H_1, E_2|H_2)\}$ and any number $\gamma \in [0,1)$, for every
$\alpha_1 \in [\gamma,1)$, $\alpha_2 \in [\gamma,1)$, with $ \alpha_1 +
\alpha_2 \geq \gamma + 1 $, one has
\begin{equation}\label{PRINT}
(p_1,p_2) \in [\alpha_1,1] \times [\alpha_2,1] \;\; \Longrightarrow
\;\; z \; \geq \alpha_1 + \alpha_2 - 1 \; \geq \; \gamma \,.
\end{equation}
We observe that, for $\gamma = 1 - \varepsilon,\, \alpha_1 = 1 - \delta_1,\, \alpha_2 = 1 - \delta_2$, with $\alpha_1 + \alpha_2 \geq \gamma + 1$, i.e. $\delta_1+\delta_2 \leq \varepsilon$, formula (\ref{PRINT}) becomes
\[
p_1 \geq 1 - \delta_1,\, p_2 \geq 1 - \delta_2 \;\; \Longrightarrow
\;\; z \; \geq 1 - \delta_1 + 1 - \delta_2 - 1 \geq 1 - \varepsilon \,.
\]
More in general, denoting by $\mathcal{L}_{\gamma}$ the set of the coherent
assessments $(p_1,\ldots,p_s)$ on $\mathcal{S}$ such that, for each $(p_1,\ldots,p_s) \in \mathcal{L}_{\gamma}$, one has $P[\mathcal{C}(\mathcal{S})] \geq \gamma$, it can be proved (see \cite{GiSa10}, Theorem 4; see also \cite{GiSa11}, Theorem 9) that
\[
\mathcal{L}_{\gamma} = \{(p_1, \ldots, p_s) \in [0,1]^s : p_1 + \cdots +
p_s  \geq \gamma + s - 1\} \,.
\]
In particular, given any $\varepsilon > 0$, it is
\[
\mathcal{L}_{1-\varepsilon} = \{(p_1, \ldots, p_s) \in [0,1]^s : p_1 + \cdots +
p_s  \geq s - \varepsilon\} \,.
\]
Then, given any positive vector $(\delta_1, \ldots, \delta_s)$ in the set \[ \Delta_{\varepsilon} = \{(\delta_1, \ldots, \delta_s) : \delta_1 + \cdots + \delta_s \leq \varepsilon\} \,,\] if
$(p_1, \ldots, p_s, z)$ is a coherent assessment on $\mathcal{S} \cup \{\mathcal{C}(\mathcal{S})\}$ such that
\[
p_1 \geq 1 - \delta_1 \,,\, p_2 \geq 1 - \delta_2 \,,\, \ldots \,,\, p_s \geq 1 - \delta_s \,,
 \]
 it follows $p_1 + \cdots + p_s  \geq s - \varepsilon$, so that $(p_1, \ldots, p_s) \in \mathcal{L}_{1-\varepsilon}$, and hence $z = P[\mathcal{C}(\mathcal{S})] \geq 1-\varepsilon$. Therefore $\mathcal{S} \; \Rightarrow_p \; \mathcal{C}(\mathcal{S})$. \\
(ii) We observe that, denoting by $\Pi_s$ the set of coherent assessments on $\mathcal{S}$, in the case of logical independence it holds that $\Pi_s = [0,1]^s$, while in case of some logical dependencies among  the events $E_i,H_i, i=1,\ldots,s$ we have $\Pi_s \subseteq [0,1]^s$. Then $\mathcal{L}_{1-\varepsilon} = \{(p_1, \ldots, p_s) \in \Pi_s : p_1 + \cdots +
p_s  \geq s - \varepsilon\}$, with
$\mathcal{L}_{1-\varepsilon} \subseteq \{(p_1, \ldots, p_s) \in [0,1]^s : p_1 + \cdots +
p_s  \geq s - \varepsilon\}$, and with $\mathcal{L}_{1-\varepsilon} \neq \emptyset$ by p-consistency of $\mathcal{F}_s$.  Then, by the same reasoning as in case (i), we still obtain $\mathcal{S} \; \Rightarrow_p \; \mathcal{C}(\mathcal{S})$. \\
(b) By Remark \ref{ENTAIL-SUB}, for each nonempty subfamily $\mathcal{S}$ of $\mathcal{F}_n$, we have that $\mathcal{F}_n$ p-entails $\mathcal{S}$; then, as $\mathcal{S}$ p-entails $\mathcal{C}(\mathcal{S})$, by applying Definition \ref{PE} with $B|A=C(\mathcal{S})$,  it follows $\mathcal{F}_n \;\Rightarrow_p \; \mathcal{C}(\mathcal{S})$.
\end{proof}
 The next result characterizes in the setting of coherence Adams' notion of \linebreak p-entailment of a conditional event $E|H$ from a family $\mathcal{F}_n$; moreover, it provides a probabilistic semantics to the notion of entailment given in Definition \ref{ENT-DP} for conditional objects. We observe that the equivalence of assertions 1 and 5 were already given (without proof) in \cite[Thm 7]{GiSa10}.
\begin{theorem}\label{ENTAIL-CS}{\rm
Let be given a p-consistent family $\mathcal{F}_n = \{E_1|H_1, \ldots, E_n|H_n\}$ and a conditional event $E|H$. The following assertions are equivalent: \\
1. $\mathcal{F}_n$ p-entails $E|H$; \\
 2. The assessment $\mathcal{P}=(1,\ldots,1,z)$ on $\mathcal{F}=\mathcal{F}_n \cup \{E|H\}$, where $P(E_i|H_i)=1,$ $i=1,\ldots,n, P(E|H)=z$, is coherent if and only if $z=1$; \\
 3. The assessment $\mathcal{P}=(1,\ldots,1,0)$ on $\mathcal{F}=\mathcal{F}_n \cup \{E|H\}$, where $P(E_i|H_i)=1,$ $i=1,\ldots,n, P(E|H)=0$, is not coherent; \\
 4. Either there exists a nonempty $\mathcal{S} \subseteq \mathcal{F}_n$  such that $\mathcal{C}(\mathcal{S})$ implies $E|H$, or $H \subseteq E$. \\
5.  There exists a nonempty $\mathcal{S} \subseteq \mathcal{F}_n$ such that $\mathcal{C}(\mathcal{S})$ p-entails $E|H$.
}\end{theorem}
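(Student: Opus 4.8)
The plan is to establish the cycle of implications $1 \Rightarrow 2 \Rightarrow 3 \Rightarrow 4 \Rightarrow 5 \Rightarrow 1$. The two probabilistic steps at the start are quick. For $1 \Rightarrow 2$ I would invoke Definition~\ref{PE}: p-entailment supplies a nonempty $\mathcal{S}=\{E_i|H_i: i\in J\}\subseteq\mathcal{F}_n$ such that, for every $\alpha<1$, any coherent assessment on $\{E|H\}\cup\mathcal{S}$ with all $p_i=1$ forces $z\ge\alpha$. Since $\mathcal{F}_n$ is p-consistent the assessment $(1,\dots,1)$ on $\mathcal{F}_n$ is coherent (Theorem~\ref{THM-S-PCONSIST}), and its coherent extensions to $E|H$ fill a closed interval of values $z$; restricting each extension to $\mathcal{S}\cup\{E|H\}$ and letting $\alpha\uparrow 1$ forces $z\ge\alpha$ for every $\alpha<1$, so this interval collapses to $\{1\}$. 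That is exactly assertion~2, and then $2\Rightarrow 3$ is immediate because $z=0\neq 1$.

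The heart of the proof is $3\Rightarrow 4$, which I would handle through Algorithm~\ref{Alg1}. For any family $\mathcal{G}\cup\{E|H\}$ carrying the assessment $(1,\dots,1,0)$, set $A=\bigwedge_{E_i|H_i\in\mathcal{G}}(E_iH_i\vee H_i^c)$ and $\mathcal{H}=\big(\bigvee_i H_i\big)\vee H$. Taking $p_i=1$ makes each coordinate $q_{hi}$ equal to $1$ unless $C_h\subseteq E_i^cH_i$, so a convex combination reaching $1$ in every first coordinate must be supported on constituents contained in $A$; forcing $z=0$ confines the support further to constituents contained in $(EH)^c$. Hence $\Sigma$ is solvable iff $A\cap(EH)^c\cap\mathcal{H}\neq\emptyset$, i.e. $\Sigma$ is unsolvable iff $A\mathcal{H}\subseteq EH$. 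A short Boolean computation, using $\bigwedge_i H_i^c\subseteq A$, shows that $A\mathcal{H}\subseteq EH$ is equivalent to the Goodman--Nguyen inclusion $\mathcal{C}(\mathcal{G})\subseteq E|H$ (and, when $\mathcal{G}=\emptyset$, to $H\subseteq E$). I would then run Algorithm~\ref{Alg1} on $(1,\dots,1,0)$: by assumption it is incoherent, so the procedure cannot stop at Step~3 declaring coherence; moreover, whenever $E|H$ is dropped from $I_0$ the surviving all-ones subassessment is coherent by p-consistency, again contradicting incoherence. Thus $E|H$ survives at every cycle and the algorithm must stop at Step~2 with an unsolvable $\Sigma$ on a pair $\mathcal{S}^*\cup\{E|H\}$, $\mathcal{S}^*\subseteq\mathcal{F}_n$; by the displayed equivalence this yields $\mathcal{C}(\mathcal{S}^*)\subseteq E|H$ when $\mathcal{S}^*\neq\emptyset$, or $H\subseteq E$ when $\mathcal{S}^*=\emptyset$, which is assertion~4. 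I expect this to be the main obstacle: the bookkeeping that the quasi conjunction of the \emph{surviving} subfamily is the object whose inclusion in $E|H$ is forced, and the verification that $E|H$ is never eliminated before unsolvability occurs.

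For $4\Rightarrow 5$ I would first record the lemma that $\mathcal{C}(\mathcal{S})\subseteq E|H$ implies $P(E|H)\ge P(\mathcal{C}(\mathcal{S}))$ for every coherent assessment: enumerating the constituents of $\{\mathcal{C}(\mathcal{S}),E|H\}$ that survive the three emptiness conditions of the inclusion, each associated point $(a_h,b_h)$ satisfies $b_h\ge a_h$ (the only delicate vertices being the ``void'' ones, where $b_h-a_h$ equals $P(E|H)\ge 0$ or $1-P(\mathcal{C}(\mathcal{S}))\ge 0$), whence $P(E|H)-P(\mathcal{C}(\mathcal{S}))=\sum_h\lambda_h(b_h-a_h)\ge 0$. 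This inequality makes $\mathcal{C}(\mathcal{S})\Rightarrow_p E|H$ immediate from Definition~\ref{PE} (take $\alpha_i=\alpha$); and if instead $H\subseteq E$, then $z=P(E|H)=1$ is the only coherent value, so $\mathcal{C}(\mathcal{S})\Rightarrow_p E|H$ for any nonempty $\mathcal{S}\subseteq\mathcal{F}_n$. Finally $5\Rightarrow 1$ closes the cycle: given $\mathcal{S}$ with $\mathcal{C}(\mathcal{S})\Rightarrow_p E|H$, Theorem~\ref{ENT-QC}(b) gives $\mathcal{F}_n\Rightarrow_p\mathcal{C}(\mathcal{S})$, and transitivity of p-entailment yields $\mathcal{F}_n\Rightarrow_p E|H$.
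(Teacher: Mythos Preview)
Your proof is correct and follows the same cycle $1\Rightarrow 2\Rightarrow 3\Rightarrow 4\Rightarrow 5\Rightarrow 1$ as the paper, using the same tools (Algorithm~\ref{Alg1} and a constituent analysis for $3\Rightarrow 4$, Theorem~\ref{ENT-QC} plus transitivity for $5\Rightarrow 1$). You supply two details the paper leaves implicit---the verification that $E|H$ cannot be eliminated from $I_0$ before unsolvability (since otherwise p-consistency of $\mathcal{F}_n$ would force coherence of the surviving all-ones subassessment), and the explicit monotonicity lemma $\mathcal{C}(\mathcal{S})\subseteq E|H \Rightarrow P(E|H)\ge P(\mathcal{C}(\mathcal{S}))$ in $4\Rightarrow 5$---but the overall argument is the same.
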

\begin{proof} We will prove that $1. \Rightarrow 2. \Rightarrow 3. \Rightarrow 4. \Rightarrow 5. \Rightarrow 1.$ \\
$(1. \Rightarrow 2.)$ Assuming that $\mathcal{F}_n$ p-entails $E|H$, then $EH \neq \emptyset$, so that the assessment $z=1$ on $E|H$ is coherent; moreover, the assessment $(1,\ldots,1,z)$ on $\mathcal{F}_n \cup \{E|H\}$, where $z=P(E|H)$, is coherent if and only if $z=1$. In fact, if by absurd the assessment $(1,\ldots,1,z)$ were coherent for some $z<1$, then given any $\varepsilon$, such that $1-\varepsilon > z$, the condition
\[
P(E_i|H_i) = 1 \,,\; i=1,\ldots,n  \; \Longrightarrow \; P(E|H) > 1-\varepsilon \,,
\]
which is necessary for p-entailment of $E|H$ from $\mathcal{F}_n$, would be not satisfied.
$(2. \Rightarrow 3.)$ It immediately follows by the previous step, when $z=0$. \\
$(3. \Rightarrow 4.)$ As the assessment $\mathcal{P}=(1,\ldots,1,0)$ on $\mathcal{F}=\mathcal{F}_n \cup \{E|H\}$ is not coherent, by applying Algorithm 1 to the pair $(\mathcal{F},\mathcal{P})$, at a certain  iteration, say the $k$-th one, the initial system  $\Sigma_k$ will be not solvable and the algorithm will stop. The system $\Sigma_k$ will be associated with a pair, say $(\mathcal{U}_k,\mathcal{P}_k)$, where  $\mathcal{U}_k = \mathcal{S}_k \cup \{E|H\}$, with $\mathcal{S}_k \subseteq \mathcal{F}_n$, and where $\mathcal{P}_k = (1,\ldots,1,0)$ is the (incoherent) sub-vector of $\mathcal{P}$ associated with $\mathcal{U}_k$. We have two cases: (i) $\mathcal{S}_k \neq \emptyset$; (ii) $\mathcal{S}_k = \emptyset$. \\
(i) For the sake of simplicity, we set $\mathcal{S}_k=\{E_1|H_1, \ldots, E_s|H_s\}$, with $s \leq n$; then, we denote by $C_1, \ldots, C_m$
the constituents generated by the family $\mathcal{S}_k \cup \{E|H\}$ and contained in $H_1 \vee \cdots \vee H_s \vee H$. Now, we will prove that $\mathcal{C}(\mathcal{S}_k) \subseteq E|H$. \\
We have
$\; \mathcal{C}(\mathcal{S}_k) = (E_1H_1 \vee H_1^c)\wedge \cdots \wedge (E_sH_s \vee H_s^c) \,|\, (H_1 \vee \cdots \vee H_s)\;$
and, if it were $\mathcal{C}(\mathcal{S}_k) \nsubseteq E|H$, then equivalently, by applying formula (\ref{NOT-GN}) to the conditional events $\mathcal{C}(\mathcal{S}_k), E|H$, there would exist at least a constituent, say $C_1$, of the following kind: \\
(a) $C_1 = B_1A_1 \cdots B_rA_rA_{r+1}^c \cdots A_{s}^cE^cH$, $1 \leq r \leq s$, or  \\
(b) $C_1 = H_1^cH_2^c \cdots H_{s}^cE^cH$, or \\
(c) $C_1 = B_1A_1 \cdots B_rA_rA_{r+1}^c \cdots A_{s}^cH^c$, $1 \leq r \leq s$, \\
where $B_i|A_i = E_{j_i}|H_{j_i},\, i=1,\ldots,s$, for a suitable permutation $(j_1, \ldots, j_s)$ of $(1,\ldots,s)$. \\
For each one of the three cases, (a), (b), (c), the vector
$(\lambda_1, \lambda_2, \ldots, \lambda_m) = (1,0, \ldots, 0)$,
associated with the constituents $C_1, C_2, \ldots, C_m$, would be a solution of the system $\Sigma_k$; then, $\Sigma_k$ would be solvable, which would be a contradiction; hence, it cannot exist any constituent of kind (a), or (b), or (c); therefore, $\mathcal{C}(\mathcal{S}_k) \subseteq E|H$. Hence the assertion 4 is true for $\mathcal{S} = \mathcal{S}_k$. \\
(ii) First of all we observe that, concerning $E$ and $H$, if $EH = \emptyset$, then the unique coherent assessment on $E|H$ is $P(E|H)=0$; if $EH \neq \emptyset$ and $H \nsubseteq E$, then the assessment $P(E|H)=p$ on  $E|H$ is coherent for every $p \in [0,1]$; if  $H \subseteq E$, then the unique coherent assessment on $E|H$ is $P(E|H)=1$. Now, as $\mathcal{S}_k = \emptyset$, the algorithm stops with $\mathcal{U}_k = \{E|H\}$; then, the assessment $P(E|H)=0$ is incoherent, which amounts to $H \subseteq E$. \\
$(4. \Rightarrow 5.)$  If $\mathcal{C}(\mathcal{S}) \subseteq E|H$ for some nonempty $\mathcal{S} \subseteq \mathcal{F}_n$, then, observing that by p-consistency of $\mathcal{F}_n$ the assessment $P[\mathcal{C}(\mathcal{S})] = 1$ is coherent, $\mathcal{C}(\mathcal{S})$ p-entails $E|H$. Otherwise, if $H \subseteq E$, then the unique coherent assessment on $E|H$ is $P(E|H)=1$ and trivially $\mathcal{C}(\mathcal{S})$ p-entails $E|H$ for every nonempty $\mathcal{S} \subseteq \mathcal{F}_n$. \\
$(5. \Rightarrow 1.)$ Assuming that $\mathcal{C}(\mathcal{S})$ p-entails $E|H$ for some nonempty $\mathcal{S} \subseteq \mathcal{F}_n$, by recalling Theorem \ref{ENT-QC}, we have $\mathcal{F}_n \;\Rightarrow_p \; \mathcal{C}(\mathcal{S}) \;\Rightarrow_p \; E|H$. Therefore, by the transitive property of p-entailment, we have $\mathcal{F}_n \;\Rightarrow_p \; E|H$.
\end{proof}

\begin{remark} {\rm Notice that, given two conditional events $A|B, E|H$, with $AB \neq \emptyset$ (so that the family $\{A|B\}$ is p-consistent), by applying Theorem \ref{ENTAIL-CS} with $n=1,\, \mathcal{F}_1 = \{A|B\}$, it follows
\[
A|B \; \Rightarrow_p \; E|H \;\Longleftrightarrow\; A|B \,\subseteq \, E|H \;\; \mbox{or} \;\; H \subseteq E \,.
\]
}\end{remark}
We observe that p-consistency of $\mathcal{F}_n\cup\{E|H\}$ is not sufficient for the \linebreak p-entailment of $E|H$ from $\mathcal{F}_n$. More precisely, we have
\begin{theorem}{\rm
\label{ALTERN}
Given a p-consistent family of $n$ conditional events $\mathcal{F}_n = \{E_1|H_1,$ $ \ldots, E_n|H_n\}$ and a conditional event $E|H$, the following assertions are equivalent: \\
a) the family $\mathcal{F}_n\cup\{E|H\}$ is  p-consistent; \\
b) exactly one of the following alternatives holds: \\
$(i)$ $\mathcal{F}_n$ p-entails $E|H$ ;\\
$(ii)$ the assessment $\mathcal{P}=(1,\ldots,1,z)$ on $\mathcal{F}_n\cup \{E|H\}$, where $P(E_i|H_i)=1, i=1,\ldots, n, P(E|H)=z$, is  coherent for every $z\in [0,1]$. }
\end{theorem}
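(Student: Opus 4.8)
The plan is to translate both conditions into statements about the set
\[
Z=\{z\in[0,1] : (1,\ldots,1,z) \mbox{ is a coherent assessment on } \mathcal{F}_n\cup\{E|H\}\},
\]
and then to exploit the fact that $Z$ is a closed interval. First I would note that, since $\mathcal{F}_n$ is p-consistent, the assessment $(1,\ldots,1)$ on $\mathcal{F}_n$ is coherent by Theorem \ref{THM-S-PCONSIST}; hence, by the fundamental theorem on coherent extensions (consistent, in this framework, with the convex-hull characterization of Theorem \ref{CNES}), the set of coherent values $z=P(E|H)$ extending $(1,\ldots,1)$ is a nonempty closed subinterval $[z_0,z_1]\subseteq[0,1]$, i.e. $Z=[z_0,z_1]$. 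Applying Theorem \ref{THM-S-PCONSIST} once more, assertion (a) is equivalent to coherence of $(1,\ldots,1,1)$, that is to $1\in Z$, i.e. to $z_1=1$. I would then rephrase (i) and (ii): by the equivalence of assertions 1 and 2 in Theorem \ref{ENTAIL-CS}, condition (i) holds if and only if $Z=\{1\}$, whereas (ii) is by definition the statement $Z=[0,1]$.

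Next I would dispatch mutual exclusivity and the backward direction, both immediate once the reformulation is in place. The sets $\{1\}$ and $[0,1]$ are distinct, so (i) and (ii) cannot hold simultaneously; moreover each of $Z=\{1\}$ and $Z=[0,1]$ forces $1\in Z$, and hence implies (a). Thus whenever exactly one of (i), (ii) holds we obtain (a).

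The core of the argument is the forward direction, that (a) implies exactly one of (i), (ii). Assume (a), so $z_1=1$, and split according to whether $\mathcal{F}_n$ p-entails $E|H$. If it does, then (i) holds. If it does not, then by the equivalence of assertions 1 and 3 in Theorem \ref{ENTAIL-CS} the assessment $(1,\ldots,1,0)$ is coherent, i.e. $0\in Z$, whence $z_0=0$; combined with $z_1=1$ and the interval property this yields $Z=[z_0,z_1]=[0,1]$, that is (ii). Together with the mutual exclusivity already noted, exactly one of (i), (ii) holds, which completes the equivalence.

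I expect the only delicate point to be the justification that $Z$ is a closed interval: it is precisely this convexity of the set of coherent extension values that lets me pass from the two endpoints $0,1\in Z$ to the whole segment $[0,1]\subseteq Z$ in the non-entailment case. Rather than re-deriving it, I would invoke de Finetti's fundamental theorem for conditional events (equivalently, argue from Theorem \ref{CNES} that the coherent values of the single unknown $z$ form the intersection of a segment with a convex region, refined by the $I_0$-recursion of Algorithm \ref{Alg1}); all remaining steps are then direct applications of Theorems \ref{THM-S-PCONSIST} and \ref{ENTAIL-CS}.
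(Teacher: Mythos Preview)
Your proof is correct and follows essentially the same route as the paper: both arguments hinge on Theorem~\ref{ENTAIL-CS} to equate (i) with coherence only at $z=1$ (equivalently, incoherence at $z=0$), on Theorem~\ref{THM-S-PCONSIST} to equate (a) with coherence at $z=1$, and on the extension theorem for coherent conditional probabilities (your ``$Z$ is a closed interval'', the paper's citation of \cite{BiGi05}) to pass from coherence at both endpoints to coherence on all of $[0,1]$. Your repackaging via the set $Z=[z_0,z_1]$ is a clean way to organize the case split, but the underlying logic is the same.
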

\begin{proof}
(a $\Rightarrow$ b) Assuming $\mathcal{F}_n\cup\{E|H\}$   p-consistent, if statement $(i)$ holds, by Theorem \ref{ENTAIL-CS} the assessment $\mathcal{P}_0=(1,\ldots,1,0)$ on $\mathcal{F}=\mathcal{F}_n \cup \{E|H\}$ is not coherent; hence, statement $(ii)$ does not hold. If $(i)$ doesn't hold, by Theorem \ref{ENTAIL-CS} the assessment $\mathcal{P}_0=(1,\ldots,1,0)$ on $\mathcal{F}_n \cup \{E|H\}$ is coherent. Moreover, by p-consistency of $\mathcal{F}_n\cup\{E|H\}$ the assessment $\mathcal{P}_1=(1,\ldots,1,1)$ on $\mathcal{F}_n \cup \{E|H\}$ is coherent. Hence, by the extension Theorem of conditional probabilities (see also \cite{BiGi05}, Theorem 7)  the assessment $\mathcal{P}=(1,\ldots,1,z)$ on $\mathcal{F}_n \cup \{E|H\}$ is coherent for every $z\in [0,1]$; in other words, statement $(ii)$ holds. \\
(b $\Rightarrow$ a) If statement $(i)$ holds, then by Theorem \ref{ENTAIL-CS} the assessment $(1,\ldots,1,1)$ on $\mathcal{F}_n \cup \{E|H\}$ is coherent; hence $\mathcal{F}_n \cup \{E|H\}$ is p-consistent. If statement $(ii)$ holds, then again the assessment $(1,\ldots,1,1)$ on $\mathcal{F}_n \cup \{E|H\}$ is coherent; hence $\mathcal{F}_n \cup \{E|H\}$ is p-consistent.
\end{proof}
When $\mathcal{F}_n\cup \{E|H\}$ is not p-consistent, both statements $(i)$ and $(ii)$, in Theorem  \ref{ALTERN}, do not hold. We observe that, given any pair $(\mathcal{F}_n, E|H)$, if $\mathcal{F}_n$ is p-consistent, then  we have the following three possible cases:
\begin{enumerate}
\item[($c_1$)]  $\mathcal{F}_n\cup \{E|H\}$ is p-consistent and $\mathcal{F}_n$ p-entails $\{E|H\}$;
\item[$(c_2)$] $\mathcal{F}_n\cup \{E|H\}$ is p-consistent and $\mathcal{F}_n$ does not p-entail $\{E|H\}$;
\item[$(c_3)$] $\mathcal{F}_n\cup \{E|H\}$ is not p-consistent.
\end{enumerate}
These three cases are characterized in the next result.
\begin{theorem}{\rm
\label{ALTERN-BIS}
Given a  p-consistent family of $n$ conditional events $\mathcal{F}_n$ and a further conditional event $E|H$, let $\mathcal{P}=(1,\ldots,1,z)$ be a probability assessment on $\mathcal{F}_n\cup \{E|H\}$, where $P(E_i|H_i)=1, i=1,\ldots, n, P(E|H)=z$. Then, exactly one of the following statements is true:\\
$(a_1)$ $\mathcal{P}=(1,\ldots,1,z)$ is coherent if and only if $z = 1$;\\
$(a_2)$ $\mathcal{P}=(1,\ldots,1,z)$ is coherent for every $z \in [0,1]$; \\
$(a_3)$ $\mathcal{P}=(1,\ldots,1,z)$ is coherent if and only if $z = 0$. }
\end{theorem}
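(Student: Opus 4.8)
The plan is to match the three statements $(a_1)$, $(a_2)$, $(a_3)$ with the three mutually exclusive and exhaustive cases $(c_1)$, $(c_2)$, $(c_3)$ listed just above the theorem for a p-consistent $\mathcal{F}_n$, and to show that each case $(c_i)$ forces the corresponding statement $(a_i)$. Since the coherent values of $z=P(E|H)$ (with the remaining components fixed at $1$) form a single well-defined set, and the three descriptions $\{1\}$, $[0,1]$, $\{0\}$ are pairwise distinct, establishing the three implications $(c_i)\Rightarrow(a_i)$ will simultaneously give existence (from exhaustiveness of the $(c_i)$) and uniqueness (from their mutual incompatibility) of the true alternative.

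The first two implications are almost immediate from the earlier results. If $(c_1)$ holds, then $\mathcal{F}_n$ p-entails $E|H$, so by the equivalence between assertions 1 and 2 of Theorem \ref{ENTAIL-CS} the assessment $(1,\ldots,1,z)$ is coherent if and only if $z=1$, which is exactly $(a_1)$. If $(c_2)$ holds, then $\mathcal{F}_n\cup\{E|H\}$ is p-consistent while $\mathcal{F}_n$ does not p-entail $E|H$; hence in Theorem \ref{ALTERN} alternative $(i)$ fails and alternative $(ii)$ must hold, i.e. $(1,\ldots,1,z)$ is coherent for every $z\in[0,1]$, which is exactly $(a_2)$.

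The remaining implication $(c_3)\Rightarrow(a_3)$ is the crux, and here I would pass to the complementary conditional event $E^c|H$. Since $P(E^c|H)=1-P(E|H)$, coherence of $(1,\ldots,1,z)$ on $\mathcal{F}_n\cup\{E|H\}$ is equivalent to coherence of $(1,\ldots,1,1-z)$ on $\mathcal{F}_n\cup\{E^c|H\}$; note also that $\mathcal{F}_n$ is still p-consistent and $E^c|H$ is a legitimate conditional event, so Theorem \ref{ENTAIL-CS} applies to the pair $(\mathcal{F}_n,E^c|H)$ as well. Now assume $(c_3)$, i.e. $\mathcal{F}_n\cup\{E|H\}$ is not p-consistent. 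By Theorem \ref{THM-S-PCONSIST} the assessment $(1,\ldots,1,1)$ on $\mathcal{F}_n\cup\{E|H\}$ is incoherent, so $z=1$ is not a coherent value; equivalently, the assessment $(1,\ldots,1,0)$ on $\mathcal{F}_n\cup\{E^c|H\}$ is incoherent. By the equivalence between assertions 3 and 1 of Theorem \ref{ENTAIL-CS} applied to $E^c|H$, this says precisely that $\mathcal{F}_n$ p-entails $E^c|H$; then the equivalence between assertions 1 and 2 applied to $E^c|H$ gives that $(1,\ldots,1,w)$ on $\mathcal{F}_n\cup\{E^c|H\}$ is coherent if and only if $w=1$. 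Translating back through $w=1-z$, this is exactly the statement that $(1,\ldots,1,z)$ on $\mathcal{F}_n\cup\{E|H\}$ is coherent if and only if $z=0$, i.e. $(a_3)$.

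Finally I would note that the three cases $(c_1)$, $(c_2)$, $(c_3)$ are exhaustive and pairwise incompatible for a p-consistent $\mathcal{F}_n$ (either $\mathcal{F}_n\cup\{E|H\}$ is p-consistent or it is not, and in the p-consistent case either $\mathcal{F}_n$ p-entails $E|H$ or it does not), so exactly one of them holds; combined with the three implications just proved and the fact that the coherent-$z$ sets $\{1\}$, $[0,1]$, $\{0\}$ are distinct, this yields that exactly one of $(a_1)$, $(a_2)$, $(a_3)$ is true. I expect the only genuinely delicate point to be the $(c_3)$ step: the reduction to $E^c|H$ must be justified carefully, so that complementation turns the incoherence of $z=1$ into the p-entailment of $E^c|H$ and hence into the sharp characterization $z=0$, rather than merely the weaker conclusion that $z=1$ is excluded.
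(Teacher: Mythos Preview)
Your proposal is correct and follows essentially the same approach as the paper: both match the three alternatives $(a_1),(a_2),(a_3)$ to the cases $(c_1),(c_2),(c_3)$, handle $(c_1)$ and $(c_2)$ via Theorem~\ref{ENTAIL-CS} and Theorem~\ref{ALTERN}, and handle $(c_3)$ by passing to the complementary conditional event $E^c|H$ so that the incoherence of $z=1$ becomes the incoherence of $(1,\ldots,1,0)$ on $\mathcal{F}_n\cup\{E^c|H\}$, hence p-entailment of $E^c|H$ by Theorem~\ref{ENTAIL-CS}, hence the sharp characterization $z=0$. The only cosmetic difference is that the paper proves full equivalences $(c_i)\Leftrightarrow(a_i)$, whereas you prove the implications $(c_i)\Rightarrow(a_i)$ and then deduce mutual exclusivity of the $(a_i)$ from the distinctness of the coherent-$z$ sets; both arguments are valid and amount to the same thing.
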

\begin{proof} We show that $(c_1)$  is equivalent to $(a_1)$, $(c_2)$  is equivalent to $(a_2)$ and $(c_3)$  is equivalent to  $(a_3)$. \\  The case $(c_1)$, by Theorem \ref{ENTAIL-CS}, amounts to say that the assessment $\mathcal{P}=(1,\ldots,1,z)$ is coherent if and only if $z = 1$, which is statement $(a_1)$. \\ The case $(c_2)$ amounts to say that the assessment $\mathcal{P}=(1,\ldots,1,z)$ is coherent for every $z \in [0,1]$, which is statement $(a_2)$.  Indeed, if $\mathcal{F}_n \cup \{E|H\}$ is p-consistent and $E|H$ is not p-entailed from $\mathcal{F}_n$, then by condition (ii) in Theorem \ref{ALTERN} the assessment $\mathcal{P}=(1,\ldots,1,z)$ is coherent for every $z \in [0,1]$. Conversely, if the assessment $\mathcal{P}=(1,\ldots,1,z)$ is coherent for every $z \in [0,1]$, then the assessments $\mathcal{P}_1=(1,\ldots,1,1)$ and $\mathcal{P}_0=(1,\ldots,1,0)$ on $\mathcal{F}_n \cup \{E|H\}$ are both coherent and hence $\mathcal{F}_n \cup \{E|H\}$ is p-consistent and $E|H$ is not p-entailed from $\mathcal{F}_n$. \\ The case $(c_3)$ amounts to say that the assessment $(1,\ldots,1,1)$ on $\mathcal{F}_n \cup \{E|H\}$ is not coherent; that is, the assessment $(1,\ldots,1,0)$ on $\mathcal{F}_n \cup \{E^c|H\}$ is not coherent; that is, by Theorem \ref{ENTAIL-CS}, $\mathcal{F}_n$ p-entails $E^c|H$, or equivalently, the assessment $(1,\ldots,1,p)$ on $\mathcal{F}_n \cup \{E^c|H\}$ is coherent if and only if $p=1$, which amounts to say that the assessment $(1,\ldots,1,z)$ on $\mathcal{F}_n \cup \{E|H\}$ is coherent if and only if $z=0$, which is  statement $(a_3)$.
\end{proof}
We observe that, if the assessment $(1,\ldots,1,z)$ on $\mathcal{F}_n \cup \{E|H\}$ is coherent for some $z \in (0,1)$, then by the previous theorem the assessment $(1,\ldots,1,z)$ is coherent for every $z \in [0,1]$.
\section{The class $\mathcal{K}$}
In this section we deepen the analysis of the quasi conjunction and the Goodman-Nguyen inclusion relation. Given a family  $\mathcal{F}_n$ and a further conditional event $E|H$, we set
\begin{equation}\label{CLASS-K}
\mathcal{K}(\mathcal{F}_n, E|H) = \{\mathcal{S} \subseteq \mathcal{F}_n, \mathcal{S} \neq \emptyset : \mathcal{C}(\mathcal{S}) \subseteq E|H\} \,.
\end{equation}
As the family $\mathcal{F}_n$ is finite, the class $\mathcal{K}(\mathcal{F}_n, E|H)$ is finite too. For the sake of simplicity, we simply denote $\mathcal{K}(\mathcal{F}_n, E|H)$ by $\mathcal{K}$. In what follows we will study the class $\mathcal{K}$, by giving some results which prove the properties listed below. \\
\textbf{Properties of class} $\mathcal{K}$
\begin{enumerate}
\item Given a nonempty subset $\mathcal{S}$ of $\mathcal{F}_n$ and a probability assessment $\mathcal{P}=(1,\ldots,1,0)$ on $\mathcal{S} \cup \{E|H\}$, where $P(E_i|H_i)=1$ for each $E_i|H_i \in \mathcal{S}$ and $P(E|H)=0$, we have: $\mathcal{S} \in \mathcal{K} \; \Longleftrightarrow \; \mathcal{P} \notin \I$, where $\I$ is the convex hull associated with the pair $(\mathcal{S} \cup \{E|H\},\mathcal{P})$;
\item it may happen that the class $\mathcal{K}$ is empty and $\F_n \Rightarrow_p E|H$;
\item the class $\mathcal{K}$ is \emph{additive}:
$\mathcal{S'}\in \mathcal{K}, \;\mathcal{S''}\in \mathcal{K} \Longrightarrow \mathcal{S'} \cup \mathcal{S''} \in \mathcal{K}$;
\item given any $\mathcal{S} \in \mathcal{K},\, \mathcal{U} \notin \mathcal{K}$, if $\mathcal{S} \subset \mathcal{U}$, then $\mathcal{U} \setminus \mathcal{S}\notin \mathcal{K}$;
\item given any nonempty subsets $\mathcal{S}$ and $\Gamma$ of $\mathcal{F}_n$, if $\mathcal{C}(\mathcal{S}) \subseteq \mathcal{C}(\Gamma)$ and $\mathcal{S}\cup \Gamma\in \mathcal{K}$, then $\mathcal{S} \in \mathcal{K}$;
    \item if $H \nsubseteq E$ and $\mathcal{F}_n \; \Rightarrow_p \; E|H$, then the class $\mathcal{K}$ is nonempty and has a greatest element $\S^*$;
        \item for every subset $\mathcal{S} \in \mathcal{K}$, we have: $\mathcal{S} \; \Rightarrow_p \; E|H$.
\end{enumerate}
Property 1 follows by showing that, for any given nonempty subset $\mathcal{S}$ of $\mathcal{F}_n$,  the relation $\mathcal{C}(\mathcal{S}) \; \subseteq \; E|H$, that is $\mathcal{S} \in \mathcal{K}$, is equivalent to the condition $\mathcal{P} \notin \mathcal{I}$
as proved in the following result.

\begin{theorem}\label{QC-SYSTEM}{\rm
Given a p-consistent family of $s$ conditional events $\mathcal{S} = \{E_1|H_1,$ $ \ldots,E_s|H_s\}$, with $s \geq 1$, and   a further conditional event $E|H$, let $\mathcal{P}=(1,\ldots,1,0)$ be a probability assessment on $\mathcal{F}=\mathcal{S} \cup \{E|H\}$. Moreover, let $\mathcal{I}$ be the convex hull of the points $Q_h$ associated with the pair $(\mathcal{F},\mathcal{P})$ and let $\Sigma$ be the starting system when applying Algorithm 1. We have
\begin{equation}\label{INCL-SOLV}
\mathcal{P} \notin \mathcal{I} \,,\;\; i.e. \,,\; \Sigma \;\; \mbox{unsolvable} \; \Longleftrightarrow \; \mathcal{C}(\mathcal{S}) \; \subseteq \; E|H \,. \end{equation}
}\end{theorem}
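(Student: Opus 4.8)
The plan is to route everything through Theorem~\ref{CNES}, which identifies $\mathcal{P}\notin\mathcal{I}$ with unsolvability of $\Sigma$; it then suffices to prove that $\Sigma$ is solvable if and only if $\mathcal{C}(\mathcal{S})\nsubseteq E|H$. First I would write $\Sigma$ out explicitly for the assessment $\mathcal{P}=(1,\ldots,1,0)$. Let $C_1,\ldots,C_m$ be the constituents contained in $H_1\vee\cdots\vee H_s\vee H$ and let $Q_h=(q_{h1},\ldots,q_{hs},q_{h,s+1})$ be the associated point. Since $p_j=1$ for $j\le s$, the coordinate $q_{hj}$ equals $1$ when $C_h\subseteq E_jH_j$ or $C_h\subseteq H_j^c$ and equals $0$ when $C_h\subseteq E_j^cH_j$; since $p_{s+1}=0$, the last coordinate $q_{h,s+1}$ equals $1$ when $C_h\subseteq EH$ and $0$ otherwise. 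Rewriting each equation $\sum_h q_{hj}\lambda_h=p_j$ for $j\le s$ as $\sum_h(1-q_{hj})\lambda_h=0$ and combining with $\sum_h\lambda_h=1$, $\lambda_h\ge 0$, these equations force $\lambda_h=0$ for every $C_h\subseteq E_j^cH_j$, while the last equation forces $\lambda_h=0$ for every $C_h\subseteq EH$.

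The key observation is then that $\Sigma$ is solvable if and only if at least one constituent is \emph{admissible}, meaning it survives all the forced zeros: a single admissible $C_h$ yields the solution $\lambda_h=1$ with all other weights $0$, and conversely every constituent carrying positive weight in a solution must be admissible. Now a constituent $C_h$ is admissible exactly when $C_h\nsubseteq E_j^cH_j$ for all $j\le s$ and $C_h\nsubseteq EH$; since an atom lies in a conjunction iff it lies in every conjunct, the first condition is $C_h\subseteq A$, where $A=\bigwedge_{j=1}^s(E_jH_j\vee H_j^c)$, and the second is $C_h\subseteq E^cH\vee H^c$. Writing $K=H_1\vee\cdots\vee H_s$, so that $\mathcal{C}(\mathcal{S})=A|K$, an admissible constituent exists if and only if the event $(K\vee H)\,A\,(E^cH\vee H^c)$ is nonempty.

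The last step is a logical simplification that I expect to be the main obstacle, as it must reproduce exactly the three terms of~(\ref{NOT-GN}). Expanding $(K\vee H)(E^cH\vee H^c)=E^cH\vee KH^c$ and conjoining with $A$ gives $(K\vee H)\,A\,(E^cH\vee H^c)=AE^cH\vee AKH^c$. The crucial algebraic identity to isolate is $AK^c=K^c$, equivalently $K^c\subseteq A$: indeed $(E_jH_j\vee H_j^c)\wedge H_j^c=H_j^c$, so conjoining $A$ with $K^c=H_1^c\cdots H_s^c$ collapses each factor and leaves $K^c$. Using this, $AE^cH=AKE^cH\vee K^cE^cH$, whence $(K\vee H)\,A\,(E^cH\vee H^c)=AKE^cH\vee K^cE^cH\vee AKH^c$; by~(\ref{NOT-GN}) applied to $A|K$ and $E|H$, this event is nonempty precisely when $\mathcal{C}(\mathcal{S})\nsubseteq E|H$. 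Chaining the equivalences $\mathcal{P}\notin\mathcal{I}\iff\Sigma$ unsolvable $\iff$ no admissible constituent exists $\iff\mathcal{C}(\mathcal{S})\subseteq E|H$ then yields~(\ref{INCL-SOLV}).
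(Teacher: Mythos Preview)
Your argument is correct and in fact more self-contained than the paper's. The paper proves the two directions asymmetrically: for $\Sigma$ unsolvable $\Rightarrow \mathcal{C}(\mathcal{S})\subseteq E|H$ it invokes the step $(3\Rightarrow 4)$ of Theorem~\ref{ENTAIL-CS} (observing that when the starting system $\Sigma$ is already unsolvable, Algorithm~\ref{Alg1} stops immediately with $\mathcal{S}_k=\mathcal{S}$); for the converse it uses the observation that $\mathcal{P}$ is a vertex of $[0,1]^{s+1}$, so $\mathcal{P}\in\mathcal{I}$ forces $Q_h=\mathcal{P}$ for some $h$, and then identifies the corresponding constituent with one of the three types (a), (b), (c) listed in the proof of Theorem~\ref{ENTAIL-CS}. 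Your route is the symmetric, elementary version of this: your ``admissible'' constituents are exactly those $C_h$ with $Q_h=\mathcal{P}$, and your logical simplification $(K\vee H)\,A\,(E^cH\vee H^c)=AKE^cH\vee K^cE^cH\vee AKH^c$ reproduces precisely the three types (a), (b), (c). What you gain is independence from Theorem~\ref{ENTAIL-CS} and Algorithm~\ref{Alg1}, so your proof could stand earlier in the paper; what the paper's version gains is brevity, since the work has already been done inside Theorem~\ref{ENTAIL-CS}. One small remark: the equivalence ``an admissible constituent exists iff the event $(K\vee H)A(E^cH\vee H^c)$ is nonempty'' tacitly uses that this event, being a Boolean combination of $E_j,H_j,E,H$, is a union of constituents $C_h$; this is immediate but worth one sentence.
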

\begin{proof}
$(\Rightarrow)$ If $\Sigma$ is unsolvable, then $\mathcal{P}=(1,\ldots,1,0)$ is not coherent and, by applying the part $(3 \Rightarrow 4)$ of Theorem \ref{ENTAIL-CS} with $\mathcal{F}_n=\mathcal{S}$, Algorithm 1 stops at $\Sigma_k = \Sigma,\, \mathcal{S}_k=\mathcal{S}$. Then, we have $\mathcal{C}(\mathcal{S}_k)= \mathcal{C}(\mathcal{S}) \subseteq E|H$. \\
$(\Leftarrow)$  If $\Sigma$ is solvable, then the point $\mathcal{P}$ belongs to the convex hull $\mathcal{I}$; that is, $\mathcal{P}$ is a linear convex combination of the points $Q_h$. Then, as $\mathcal{P}$ is a vertex of the unitary hypercube $[0,1]^{s+1}$, which contains $\mathcal{I}$, the condition $\mathcal{P} \in \mathcal{I}$ is satisfied if and only if there exists a point $Q_h$, say $Q_1$, which coincides with $\mathcal{P}$. Then, there exists at least a constituent $C_1$ of the kind (a), or (b), or (c), as defined in the proof of Theorem \ref{ENTAIL-CS}, and this implies that $\mathcal{C}(\mathcal{S}) \nsubseteq E|H$.
\end{proof}

We give below an example to illustrate the previous result.
\begin{example}[\emph{Cut Rule}]
Given three logically independent events $A, B, C$, let us consider the family
$\mathcal{S} = \{C|AB, B|A\}$ and the further conditional event $C|A$. Of course, $\mathcal{S}$ p-entails $C|A$.
Let $\mathcal{P}=(1,1,0)$ be a probability assessment on $\mathcal{F}=\mathcal{S} \cup \{C|A\}$.
The constituents contained in $\H_3=A$ are
\[
C_1=ABC \,,\; C_2=ABC^c \,,\; C_3=AB^cC \,,\; C_4=AB^cC^c \,,;
\]
The associated points $Q_h$'s are
\[
Q_1=(1,1,1) \,,\; Q_2=(0,1,0) \,,\; Q_3=(1,0,1) \,,\; Q_4=(1,0,0) \,.
\]
\begin{figure}[!ht]
\label{fig:CUT}
\begin{center}
\includegraphics[scale=\myscale]{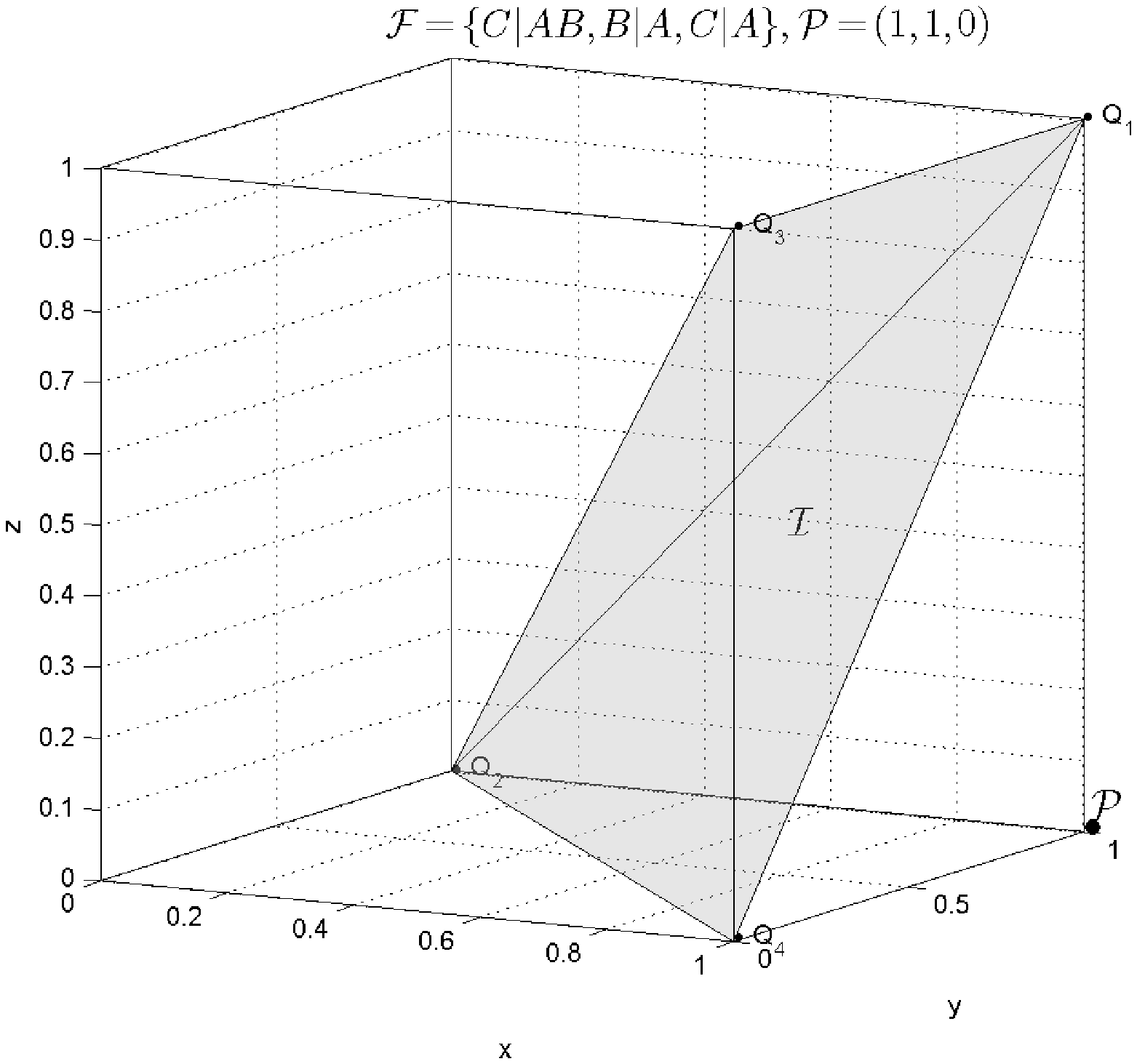}
\caption{The convex hull $\I$  associated with the pair $(\F,\P)$.}
\label{fig1}
\end{center}
\end{figure}
In Figure \ref{fig:CUT}  the  convex hull $\mathcal{I}$  of the points $Q_h$ associated with the pair $(\mathcal{F},\mathcal{P})$ is shown.  As $\C(C|AB,B|A)=BC|A\subseteq C|A$,   the system $\Sigma$ is not solvable; that is, as shown in Figure \ref{fig:CUT}, we have $\mathcal{P}=(1,1,0)\notin \mathcal{I}$.
\end{example}
The property 2 is proved by the following
\begin{proposition}
Given any pair $(\F_n, E|H)$, the following conditions are compatible: (i) the class $\mathcal{K}$ is empty; (ii) $\F_n \Rightarrow_p E|H$.
\end{proposition}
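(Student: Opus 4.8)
The statement is an existence (compatibility) claim, so the plan is to exhibit a single explicit pair $(\F_n,E|H)$ realising both (i) and (ii). The organising remark is Theorem \ref{ENTAIL-CS}: by the equivalence of assertions~1 and~4, $\F_n\Rightarrow_p E|H$ holds precisely when either some nonempty $\S\subseteq\F_n$ satisfies $\C(\S)\subseteq E|H$ or $H\subseteq E$. But $\K=\emptyset$ is exactly the denial of the first alternative, so in any witness the p-entailment must be forced through the second, purely logical, route $H\subseteq E$. This tells me what to build: a conditional event $E|H$ that is logically true in the sense $H\subseteq E$, paired with a family none of whose quasi conjunctions is Goodman-Nguyen included in $E|H$.

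Concretely, I would take three logically independent events $A,B,C$, the one-element family $\F_1=\{A|B\}$, and $E|H=\Omega|C$. Since $AB\neq\emptyset$, the assessment $P(A|B)=1$ is coherent and $\F_1$ is p-consistent. For (ii): as $H=C\subseteq\Omega=E$, the second disjunct of assertion~4 of Theorem \ref{ENTAIL-CS} is met, so $\F_1\Rightarrow_p E|H$. For (i): the only nonempty subset of $\F_1$ is $\S=\{A|B\}$ with $\C(\S)=A|B$, so $\K\neq\emptyset$ would require $A|B\subseteq\Omega|C$. Reading off the inclusion relation, this amounts to $AB\subseteq\Omega C=C$ together with the vacuous condition $\Omega^cC=\emptyset\subseteq A^cB$; hence $A|B\subseteq\Omega|C$ is equivalent to $AB\subseteq C$. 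Logical independence yields $ABC^c\neq\emptyset$, so $AB\nsubseteq C$ and therefore $A|B\nsubseteq\Omega|C$. Thus $\S\notin\K$, giving $\K=\emptyset$, and (i) and (ii) hold together.

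The only step requiring genuine attention is the verification $\K=\emptyset$, which in general demands ruling out $\C(\S)\subseteq E|H$ for every nonempty $\S\subseteq\F_n$; choosing a one-element family collapses this to the single Goodman-Nguyen check above, and I expect no real obstacle beyond translating $\C(\S)\subseteq E|H$ through (\ref{NOT-GN}) and invoking independence to certify that the constituent $ABC^c$ is nonempty. If one wishes to avoid the sure event, the identical argument goes through with $E=A\vee C$, since then $H=C\subseteq E$ and, as a three-valued object, $(A\vee C)|C$ coincides with $\Omega|C$, so the inclusion $A|B\subseteq E|H$ again reduces to $AB\subseteq C$ and fails.
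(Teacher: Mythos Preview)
Your proof is correct and follows essentially the same approach as the paper: both identify that the witness must satisfy $H\subseteq E$ (so that p-entailment holds trivially via assertion~4 of Theorem~\ref{ENTAIL-CS}) together with a one-element family $\{A|B\}$ for which $A|B\nsubseteq E|H$. The paper states these two conditions abstractly, while you instantiate them concretely with $E|H=\Omega|C$ and logically independent $A,B,C$, and you are slightly more careful in also verifying p-consistency of $\F_1$.
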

\begin{proof}
We observe that, if $H \subseteq E$, then the family $\mathcal{F}_n$ trivially p-entails $E|H$; at the same time it may happen that there doesn't exist any (nonempty) subset $\mathcal{S}$ of $\mathcal{F}_n$ such that $\mathcal{C}(\mathcal{S}) \subseteq E|H$; for instance, if $\mathcal{F}_n = \{B|A\}$, with $B|A \nsubseteq E|H$ and $H \subseteq E$, then $\{B|A\}$ trivially p-entails $E|H$ and the class $\mathcal{K}$ is empty.
\end{proof}
The property 3 is proved in the next result.
\begin{theorem}\label{CS_UNION}{\rm
Given two nonempty subsets $\mathcal{S}'$ and $\mathcal{S}''$ of $\mathcal{F}_n$ and a conditional event $E|H$, assume that $\mathcal{S}' \in \mathcal{K},\, \mathcal{S}'' \in \mathcal{K}$. Then, $\mathcal{S}' \cup \mathcal{S}'' \in \mathcal{K}$.
}\end{theorem}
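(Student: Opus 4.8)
The plan is to reduce the statement to a single two-operand fact about quasi conjunction and the Goodman--Nguyen inclusion, and then to verify that fact directly through formula (\ref{NOT-GN}).

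First I would unfold the definition of $\mathcal{K}$ in (\ref{CLASS-K}): the hypotheses $\mathcal{S}' \in \mathcal{K}$ and $\mathcal{S}'' \in \mathcal{K}$ mean exactly $\mathcal{C}(\mathcal{S}') \subseteq E|H$ and $\mathcal{C}(\mathcal{S}'') \subseteq E|H$, whereas the goal $\mathcal{S}' \cup \mathcal{S}'' \in \mathcal{K}$ is the single inclusion $\mathcal{C}(\mathcal{S}' \cup \mathcal{S}'') \subseteq E|H$ (the union is nonempty, so membership is well posed). Using that quasi conjunction is commutative, associative and idempotent---idempotency being $\mathcal{C}(E_i|H_i,E_i|H_i) = E_i|H_i$, which accounts for the conditional events possibly shared by $\mathcal{S}'$ and $\mathcal{S}''$---I would rewrite $\mathcal{C}(\mathcal{S}' \cup \mathcal{S}'') = \mathcal{C}[\mathcal{C}(\mathcal{S}'),\mathcal{C}(\mathcal{S}'')]$. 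This collapses the theorem to the elementary claim: if $X \subseteq E|H$ and $Y \subseteq E|H$, then $\mathcal{C}(X,Y) \subseteq E|H$.

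To prove this claim I would set $X = A|L$, $Y = B|M$ and write $\mathcal{C}(X,Y) = N|(L \vee M)$ with $N = (AL \vee L^c)(BM \vee M^c)$. Expanding $N$ and discarding the $L^c M^c$ disjunct (annihilated by the conditioning $L \vee M$) identifies the true part as $ALBM \vee ALM^c \vee L^c BM$ and the void part as $L^c M^c$. By formula (\ref{NOT-GN}) applied to $\mathcal{C}(X,Y)$ and $E|H$, the inclusion $\mathcal{C}(X,Y) \subseteq E|H$ is equivalent to three emptiness conditions: (true part)$\,\cdot E^c H = \emptyset$, (void part)$\,\cdot E^c H = \emptyset$ (i.e. $L^c M^c E^c H = \emptyset$), and (true part)$\,\cdot H^c = \emptyset$. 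The hypotheses, again via (\ref{NOT-GN}), supply $AL E^c H = L^c E^c H = AL H^c = \emptyset$ from $A|L \subseteq E|H$ and $BM E^c H = M^c E^c H = BM H^c = \emptyset$ from $B|M \subseteq E|H$. Each disjunct of the three target products is then contained in one of these six empty sets (for instance $ALM^c E^c H \subseteq AL E^c H$, $L^c BM H^c \subseteq BM H^c$, and $L^c M^c E^c H \subseteq L^c E^c H$), so all three conditions hold and the claim follows.

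Combining the two steps gives $\mathcal{C}[\mathcal{C}(\mathcal{S}'),\mathcal{C}(\mathcal{S}'')] \subseteq E|H$, hence $\mathcal{C}(\mathcal{S}' \cup \mathcal{S}'') \subseteq E|H$ and $\mathcal{S}' \cup \mathcal{S}'' \in \mathcal{K}$. I expect the main obstacle to be the bookkeeping inside the lemma---correctly isolating the true and void constituents of the quasi conjunction and matching every resulting disjunct to the appropriate hypothesis---together with the small but essential observation that it is idempotency which legitimises rewriting the quasi conjunction of an overlapping union by the associativity rule stated in the excerpt only for disjoint subfamilies.
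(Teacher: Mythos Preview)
Your proposal is correct and follows essentially the same route as the paper: both reduce via associativity to $\mathcal{C}(\mathcal{S}' \cup \mathcal{S}'') = \mathcal{C}[\mathcal{C}(\mathcal{S}'),\mathcal{C}(\mathcal{S}'')]$ and then verify the two-operand implication ``$X \subseteq E|H$ and $Y \subseteq E|H$ imply $\mathcal{C}(X,Y) \subseteq E|H$''. The only difference is presentational: the paper checks this implication directly in the three-valued semantics (\emph{true} of $\mathcal{C}(X,Y)$ forces \emph{true} of $X$ or of $Y$, hence of $E|H$; \emph{false} of $E|H$ forces \emph{false} of both $X$ and $Y$, hence of $\mathcal{C}(X,Y)$), whereas you unfold the same facts algebraically through formula~(\ref{NOT-GN}); your explicit remark about idempotency for overlapping $\mathcal{S}',\mathcal{S}''$ is a detail the paper leaves implicit.
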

\begin{proof} By the associative property, it is $\mathcal{C}(\mathcal{S}' \cup \mathcal{S}'')=\mathcal{C}(\mathcal{C}(\mathcal{S}')\,, \mathcal{C}(\mathcal{S}''))$; then: \\ (i)  $\mathcal{C}(\mathcal{S}' \cup \mathcal{S}'')$ {\em true} implies $\mathcal{C}(\mathcal{S}')$ {\em true}, or $\mathcal{C}(\mathcal{S}'')$ {\em true}; hence, $E|H$ is {\em true}; \\ (ii) $E|H$ {\em false} implies that $\mathcal{C}(\mathcal{S}')$ and $\mathcal{C}(\mathcal{S}'')$ are both {\em false}; hence, $\mathcal{C}(\mathcal{S}' \cup \mathcal{S}'')$ is {\em false}. Therefore: $\mathcal{C}(\mathcal{S}' \cup \mathcal{S}'') \subseteq E|H$. Hence $\mathcal{S}' \cup \mathcal{S}'' \in \mathcal{K}$; that is $\mathcal{K}$ is additive.
\end{proof}
The property 4 is proved in the following
\begin{corollary}{\rm
Given two subsets $\mathcal{S}$ and $\mathcal{U}$ of $\mathcal{F}_n$, assume that $\mathcal{S} \subset \mathcal{U}$, with $\mathcal{C}(\mathcal{S}) \subseteq E|H,\, \mathcal{C}(\mathcal{U}) \nsubseteq E|H$. Then: $\mathcal{C}(\mathcal{U} \setminus \mathcal{S}) \nsubseteq E|H$.
}\end{corollary}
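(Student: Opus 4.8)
The plan is to derive this as the contrapositive of the additivity property already established in Theorem \ref{CS_UNION}. Writing $\Gamma = \mathcal{U} \setminus \mathcal{S}$, I would first record the structural facts that make the additivity theorem applicable: since $\mathcal{S} \subset \mathcal{U}$ is a strict inclusion, $\Gamma$ is a nonempty subset of $\mathcal{F}_n$; $\mathcal{S}$ is nonempty as well, being (by the hypothesis $\mathcal{C}(\mathcal{S}) \subseteq E|H$) a member of $\mathcal{K}$; and $\mathcal{U} = \mathcal{S} \cup \Gamma$. This places $\mathcal{S}$ and $\Gamma$ in exactly the configuration of two nonempty subsets of $\mathcal{F}_n$ required to invoke additivity.

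I would then argue by contradiction. Suppose $\mathcal{C}(\mathcal{U} \setminus \mathcal{S}) \subseteq E|H$, i.e.\ $\Gamma \in \mathcal{K}$. Since by hypothesis $\mathcal{C}(\mathcal{S}) \subseteq E|H$, i.e.\ $\mathcal{S} \in \mathcal{K}$, both $\mathcal{S}$ and $\Gamma$ belong to $\mathcal{K}$. Applying Theorem \ref{CS_UNION} to the pair $\mathcal{S}, \Gamma$ gives $\mathcal{S} \cup \Gamma \in \mathcal{K}$, that is $\mathcal{C}(\mathcal{S} \cup \Gamma) = \mathcal{C}(\mathcal{U}) \subseteq E|H$. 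This contradicts the hypothesis $\mathcal{C}(\mathcal{U}) \nsubseteq E|H$. Hence $\mathcal{C}(\mathcal{U} \setminus \mathcal{S}) \nsubseteq E|H$, as claimed.

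There is no genuine obstacle here: the entire content of Property 4 is the observation that it is logically equivalent to the contrapositive of additivity. The only point that deserves a moment's care is verifying that the hypotheses of Theorem \ref{CS_UNION} are met, namely that both $\mathcal{S}$ and $\mathcal{U} \setminus \mathcal{S}$ are nonempty; the former holds because $\mathcal{S} \in \mathcal{K}$ and members of $\mathcal{K}$ are nonempty by the definition in (\ref{CLASS-K}), and the latter because the inclusion $\mathcal{S} \subset \mathcal{U}$ is strict.
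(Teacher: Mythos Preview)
Your proof is correct and follows essentially the same approach as the paper's own proof: both argue by contradiction, assume $\mathcal{C}(\mathcal{U}\setminus\mathcal{S})\subseteq E|H$, apply the additivity of $\mathcal{K}$ (Theorem \ref{CS_UNION}) to obtain $\mathcal{C}(\mathcal{U})\subseteq E|H$, and derive a contradiction. Your version is slightly more explicit in checking the nonemptiness hypotheses needed to invoke Theorem \ref{CS_UNION}, which is a welcome clarification.
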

\begin{proof} The proof immediately follows by Theorem \ref{CS_UNION} by observing that, if \linebreak $\mathcal{C}(\mathcal{U} \setminus \mathcal{S}) \subseteq E|H$, then  $\mathcal{C}[\mathcal{S} \, \cup \, (\mathcal{U} \setminus \mathcal{S})] = \mathcal{C}(\mathcal{U}) \subseteq E|H$, which contradicts the hypothesis.
\end{proof}
In order to prove the property 5, we recall that $A|H \subseteq B|K$ amounts to $H^cB^cK = AHB^cK = AHK^c = \emptyset$  (see Remark \ref{NOT-GN}); thus \[
\mathcal{C}(A|H,B|K)=(AH \vee H^cBK) \,|\, (H \vee K) \,.
\]
Moreover, as shown by Table \ref{TABLE-QC-INCL}, it holds that
\begin{equation}\label{QC-GN}
A|H \subseteq \mathcal{C}(A|H,B|K) \subseteq B|K \,.
\end{equation}
\begin{table}[!ht]
\center
\[
\begin{array}{cc}
\begin{array}{l|ccccc}
\mbox{Constituent}      &  A|H  &   \mathcal{C}(A|H,B|K)   & B|K\\
\hline
H^cK^c   &       \Void  & \Void     & \Void     \\
AHBK    &        \True  & \True     & \True       \\
H^cBK   &        \Void  & \True     & \True        \\
A^cHBK  &        \False & \False    & \True        \\
A^cHK^c  &       \False  & \False    & \Void       \\
A^cHB^cK   &    \False   & \False    & \False       \end{array} 
\end{array}
\]
\caption{Truth-Table of   $A|H,  \mathcal{C}(A|H,B|K), B|K$ in case $A|H\subseteq B|K$. }
\label{TABLE-QC-INCL}
\end{table}

Then, we have
\begin{theorem}\label{ENT-DIMIN}{\rm
Let $\mathcal{F}_n$ be a family of $n$ conditional events, with $n \geq 2$, and $E|H$ be a further conditional event. Moreover, let $\mathcal{S}$ and $\Gamma$ be two nonempty subfamilies of $\mathcal{F}_n$ such that $\mathcal{C}(\mathcal{S}) \subseteq \mathcal{C}(\Gamma)$ and $\mathcal{C}(\mathcal{S} \cup \Gamma) \subseteq E|H$. Then, we have $\mathcal{C}(\mathcal{S}) \subseteq E|H$.
}
\end{theorem}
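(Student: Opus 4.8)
The plan is to reduce the claim to the transitivity of the Goodman-Nguyen inclusion relation, using associativity of quasi conjunction together with the chain of inclusions recorded in (\ref{QC-GN}). The whole argument will be purely qualitative, at the level of the three-valued semantics, with no probabilistic computation involved.

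First I would rewrite the hypothesis $\mathcal{C}(\mathcal{S}\cup\Gamma)\subseteq E|H$ by invoking the associativity of quasi conjunction, exactly as in the proof of Theorem \ref{CS_UNION}: since $\mathcal{C}(\mathcal{S}\cup\Gamma)=\mathcal{C}(\mathcal{C}(\mathcal{S}),\mathcal{C}(\Gamma))$, everything can be phrased in terms of the quasi conjunction of the two conditional events $\mathcal{C}(\mathcal{S})$ and $\mathcal{C}(\Gamma)$. Then, setting $A|H=\mathcal{C}(\mathcal{S})$ and $B|K=\mathcal{C}(\Gamma)$, the hypothesis $\mathcal{C}(\mathcal{S})\subseteq\mathcal{C}(\Gamma)$ becomes $A|H\subseteq B|K$, which is precisely the situation covered by (\ref{QC-GN}). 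The left-hand inclusion there, $A|H\subseteq\mathcal{C}(A|H,B|K)$, gives at once
\[
\mathcal{C}(\mathcal{S})\;\subseteq\;\mathcal{C}(\mathcal{C}(\mathcal{S}),\mathcal{C}(\Gamma))\;=\;\mathcal{C}(\mathcal{S}\cup\Gamma)\,.
\]

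Finally I would chain this with the second hypothesis $\mathcal{C}(\mathcal{S}\cup\Gamma)\subseteq E|H$ and conclude $\mathcal{C}(\mathcal{S})\subseteq E|H$ by transitivity of $\subseteq$. The only ingredient needing a word of justification is that transitivity, which is immediate from the three-valued reading of the relation: $X\subseteq Y$ says that $X$ \emph{true} forces $Y$ \emph{true} and $Y$ \emph{false} forces $X$ \emph{false}, and composing two such relations preserves both implications. I do not anticipate any genuine obstacle; the one step worth isolating is the use of (\ref{QC-GN}) to produce the bridging inclusion $\mathcal{C}(\mathcal{S})\subseteq\mathcal{C}(\mathcal{S}\cup\Gamma)$, since it is this that converts the hypothesis $\mathcal{C}(\mathcal{S})\subseteq\mathcal{C}(\Gamma)$ into something directly comparable with $E|H$.
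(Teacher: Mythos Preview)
Your proposal is correct and follows essentially the same route as the paper: both use associativity to rewrite $\mathcal{C}(\mathcal{S}\cup\Gamma)=\mathcal{C}(\mathcal{C}(\mathcal{S}),\mathcal{C}(\Gamma))$, invoke (\ref{QC-GN}) with $A|H=\mathcal{C}(\mathcal{S})$ and $B|K=\mathcal{C}(\Gamma)$ to obtain $\mathcal{C}(\mathcal{S})\subseteq\mathcal{C}(\mathcal{S}\cup\Gamma)$, and then conclude by transitivity with the hypothesis $\mathcal{C}(\mathcal{S}\cup\Gamma)\subseteq E|H$. Your added remark on transitivity of the Goodman--Nguyen relation is a harmless clarification the paper leaves implicit.
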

\begin{proof} By the associative property of quasi conjunction we have $\mathcal{C}(\mathcal{C}(\mathcal{S}), \mathcal{C}(\Gamma)) = \mathcal{C}(\mathcal{S} \cup \Gamma)$; then, by applying (\ref{QC-GN}), with $A|H = \mathcal{C}(\mathcal{S})$ and $B|K = \mathcal{C}(\Gamma)$, we obtain $\mathcal{C}(\mathcal{S}) \subseteq \mathcal{C}(\mathcal{S} \cup \Gamma) \subseteq \mathcal{C}(\Gamma)$. As $\mathcal{C}(\mathcal{S} \cup \Gamma) \subseteq E|H$, it follows $\mathcal{C}(\mathcal{S}) \subseteq E|H$.
\end{proof}
The property 6 is proved in the next result.
\begin{theorem}\label{THM-GUE}{\rm
Given a family of $n$ conditional events $\mathcal{F}_n$ and a further conditional event $E|H$, with $H \nsubseteq E$, assume that $\mathcal{F}_n$ p-entails $E|H$. Then, the class $\mathcal{K}$ is nonempty and has a greatest element $\mathcal{S}^*$.
}\end{theorem}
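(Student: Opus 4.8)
The plan is to establish the two assertions in turn --- first that $\mathcal{K}$ is nonempty, and then that it possesses a greatest element under set inclusion. Both claims reduce quickly to results already available in the excerpt, so the argument is short and the work lies almost entirely in invoking the right prior theorem.

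For nonemptiness, I would appeal to the equivalence of assertions 1 and 4 in Theorem \ref{ENTAIL-CS}. By hypothesis $\mathcal{F}_n$ p-entails $E|H$, so assertion 1 holds and hence assertion 4 holds as well: either there exists a nonempty $\mathcal{S} \subseteq \mathcal{F}_n$ with $\mathcal{C}(\mathcal{S}) \subseteq E|H$, or $H \subseteq E$. The standing hypothesis $H \nsubseteq E$ excludes the second disjunct, which forces the first. But the first disjunct is exactly the statement that some nonempty $\mathcal{S}$ belongs to $\mathcal{K}$, whence $\mathcal{K} \neq \emptyset$.

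For the greatest element, the essential tool is the additivity of $\mathcal{K}$ proved in Theorem \ref{CS_UNION}, namely that $\mathcal{S}' \in \mathcal{K}$ and $\mathcal{S}'' \in \mathcal{K}$ imply $\mathcal{S}' \cup \mathcal{S}'' \in \mathcal{K}$. A trivial induction extends this from binary unions to any finite union of members of $\mathcal{K}$. Since $\mathcal{F}_n$ is finite, $\mathcal{K}$ is a finite (and, by the previous paragraph, nonempty) class of subsets of $\mathcal{F}_n$. I would therefore define $\mathcal{S}^* = \bigcup_{\mathcal{S} \in \mathcal{K}} \mathcal{S}$; as a finite union of members of $\mathcal{K}$, this $\mathcal{S}^*$ is itself a member of $\mathcal{K}$, and by its very construction it contains every $\mathcal{S} \in \mathcal{K}$. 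Thus $\mathcal{S}^*$ is the greatest element of $\mathcal{K}$ with respect to inclusion.

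I do not expect any substantive obstacle in this proof; the single point that deserves an explicit word is that additivity as stated gives only closure under \emph{binary} union, so the finiteness of $\mathcal{K}$ is what allows the binary case to bootstrap to the full union $\bigcup_{\mathcal{S} \in \mathcal{K}} \mathcal{S}$, thereby guaranteeing that this union is an actual member of $\mathcal{K}$ rather than merely an upper bound lying outside it. Once that is noted, the two halves of the statement fall out immediately from Theorems \ref{ENTAIL-CS} and \ref{CS_UNION}.
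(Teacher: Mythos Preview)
Your proposal is correct and follows essentially the same argument as the paper: nonemptiness via assertion 4 of Theorem \ref{ENTAIL-CS} together with the hypothesis $H \nsubseteq E$, and existence of a greatest element by taking $\mathcal{S}^*=\bigcup_{\mathcal{S}\in\mathcal{K}}\mathcal{S}$ and using additivity (Theorem \ref{CS_UNION}) plus finiteness of $\mathcal{K}$. Your explicit remark that binary additivity extends to finite unions by induction is a nice clarification that the paper leaves implicit.
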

\begin{proof}
Since  $\mathcal{F}_n$ p-entails $E|H$, by assertion 4 in Theorem \ref{ENTAIL-CS}, $\mathcal{K}$ is nonempty; moreover, by Theorem \ref{CS_UNION}, $\mathcal{K}$ is additive. Then, denoting by $\mathcal{S}^*$ the union of all elements of $\mathcal{K}$, it holds that $\mathcal{S}^* \in \mathcal{K}$. Of course, $\mathcal{S}^*$ is the greatest element of $\mathcal{K}$; that is, $\mathcal{S} \subseteq \mathcal{S}^*$, for every $\mathcal{S} \in \mathcal{K}$.
\end{proof}
The property 7 is proved in the next result.
\begin{proposition}
Given a family of $n$ conditional events $\mathcal{F}_n$ and a further conditional event $E|H$, assume that the class $\mathcal{K}$ is nonempty. Then, for every subset $\mathcal{S} \in \mathcal{K}$, we have: $\mathcal{S} \; \Rightarrow_p \; E|H$.
\end{proposition}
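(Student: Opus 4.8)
The plan is to reduce the claim to Theorem \ref{ENTAIL-CS}, applied not to the whole family $\mathcal{F}_n$ but to the subfamily $\mathcal{S}$ itself. The crucial observation is that the defining condition for membership $\mathcal{S} \in \mathcal{K}$, namely $\mathcal{C}(\mathcal{S}) \subseteq E|H$, is precisely assertion 4 of Theorem \ref{ENTAIL-CS} read with $\mathcal{S}$ in the role of the premise family: the required nonempty subset is taken to be $\mathcal{S}$ itself, whose quasi conjunction $\mathcal{C}(\mathcal{S})$ implies $E|H$ by hypothesis. So once the hypotheses of that theorem are checked for the pair $(\mathcal{S}, E|H)$, the equivalence $4 \Rightarrow 1$ delivers exactly $\mathcal{S} \Rightarrow_p E|H$.

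First I would verify that $\mathcal{S}$ is p-consistent, so that both Theorem \ref{ENTAIL-CS} and the very notion of p-entailment in Definition \ref{PE} are legitimately applicable to $(\mathcal{S}, E|H)$. This is immediate from the standing p-consistency of $\mathcal{F}_n$: as already noted at the beginning of the proof of Theorem \ref{ENT-QC}, p-consistency is inherited by every subfamily, hence $\mathcal{S} \subseteq \mathcal{F}_n$ is p-consistent. With this in hand, I would then invoke Theorem \ref{ENTAIL-CS} for the pair $(\mathcal{S}, E|H)$: since $\mathcal{S} \in \mathcal{K}$ gives $\mathcal{C}(\mathcal{S}) \subseteq E|H$, assertion 4 holds (witnessed by the subset $\mathcal{S}$ itself), and therefore assertion 1 holds, i.e. $\mathcal{S}$ p-entails $E|H$.

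Equivalently, one can argue by transitivity: Theorem \ref{ENT-QC}(a) yields the QAND step $\mathcal{S} \Rightarrow_p \mathcal{C}(\mathcal{S})$, while $\mathcal{C}(\mathcal{S}) \subseteq E|H$, together with coherence of the assessment $P[\mathcal{C}(\mathcal{S})]=1$ (again from p-consistency of $\mathcal{S}$), gives $\mathcal{C}(\mathcal{S}) \Rightarrow_p E|H$ via the Remark following Theorem \ref{ENTAIL-CS}; transitivity of p-entailment then closes the argument. I do not expect a genuine obstacle here, since the substance is already packaged in Theorem \ref{ENTAIL-CS} and Theorem \ref{ENT-QC}. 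The only point demanding care is the p-consistency bookkeeping, that is, confirming that $\mathcal{S}$ (and, in the second route, the singleton $\{\mathcal{C}(\mathcal{S})\}$) is a bona fide p-consistent premise, so that the p-entailment relation is not being applied vacuously to an inconsistent family.
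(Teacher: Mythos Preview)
Your proposal is correct and follows essentially the same route as the paper: the paper's one-line proof also reduces the claim to Theorem~\ref{ENTAIL-CS}, observing that $\mathcal{S}\in\mathcal{K}$ means $\mathcal{C}(\mathcal{S})\subseteq E|H$ and then citing the step $(4.\Rightarrow 5.)$ of that theorem. Your write-up is in fact more explicit about the p-consistency bookkeeping and about applying Theorem~\ref{ENTAIL-CS} to the pair $(\mathcal{S},E|H)$ rather than $(\mathcal{F}_n,E|H)$, and your alternative transitivity argument (QAND plus $\mathcal{C}(\mathcal{S})\Rightarrow_p E|H$) exactly unpacks what the paper leaves implicit.
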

\begin{proof}The condition $\mathcal{S} \in \mathcal{K}$ amounts to $\mathcal{C}(\mathcal{S}) \subseteq E|H$; then, by the step $(4. \Rightarrow 5.)$ of Theorem \ref{ENTAIL-CS}, it follows that: $\mathcal{S} \in \mathcal{K}$ implies $\mathcal{S} \Rightarrow_p E|H$.
\end{proof}
\begin{remark}{\rm Assuming $H \nsubseteq E$, by Theorem \ref{ENTAIL-CS},  $\mathcal{F}_n$ p-entails $E|H$ if and only if there exists a nonempty subset $\mathcal{S}_k$ of  $\mathcal{F}_n$ such that, when applying Algorithm~1 to the assessment $\mathcal{P}=(1,\ldots,1,0)$ on $\mathcal{F}=\mathcal{F}_n \cup \{E|H\}$, the system $\Sigma_k$ associated with the family $\mathcal{S}_k \cup \{E|H\}$ is not solvable and the algorithm will stop. In the next result we show that $\mathcal{S}_k$ coincides with the greatest element of $\mathcal{K}$, $\mathcal{S^*}$.
}\end{remark}
\begin{theorem}\label{ALG-GREAT}{\rm
Given a family of $n$ conditional events $\mathcal{F}_n$ and a further conditional event $E|H$, with $H \nsubseteq E$, assume that $\mathcal{F}_n$ p-entails $E|H$. Moreover, let $\mathcal{P}=(1,\ldots,1,0)$ be a probability assessment on $\mathcal{F}=\mathcal{F}_n \cup \{E|H\}$. Then, $\mathcal{P}$ is not coherent and by applying Algorithm 1 to the pair $(\mathcal{F},\mathcal{P})$, the nonempty subset $\mathcal{S}_k$, associated with the iteration where Algorithm 1 stops, coincides with the greatest element $\mathcal{S}^*$ of $\mathcal{K}$.
}
\end{theorem}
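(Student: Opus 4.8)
The plan is to establish the two inclusions $\mathcal{S}_k \subseteq \mathcal{S}^*$ and $\mathcal{S}^* \subseteq \mathcal{S}_k$ separately. The first is immediate from the machinery already in place. By Theorem~\ref{THM-GUE}, under the hypotheses $H \nsubseteq E$ and $\mathcal{F}_n \Rightarrow_p E|H$ the class $\mathcal{K}$ is nonempty and its greatest element $\mathcal{S}^*$ (the union of all members of $\mathcal{K}$) itself lies in $\mathcal{K}$, so $\mathcal{C}(\mathcal{S}^*) \subseteq E|H$. By the chain $1.\Rightarrow 2.\Rightarrow 3.$ of Theorem~\ref{ENTAIL-CS}, the assessment $\mathcal{P}=(1,\ldots,1,0)$ is incoherent, so Algorithm~\ref{Alg1} must halt at step~2 on an unsolvable system $\Sigma_k$ associated with a pair $(\mathcal{U}_k,\mathcal{P}_k)$; arguing exactly as in part $(3.\Rightarrow 4.)$ of Theorem~\ref{ENTAIL-CS} one gets $\mathcal{U}_k=\mathcal{S}_k\cup\{E|H\}$ with $\mathcal{S}_k\subseteq\mathcal{F}_n$ nonempty (the alternative $\mathcal{S}_k=\emptyset$ is excluded by $H\nsubseteq E$) and $\mathcal{C}(\mathcal{S}_k)\subseteq E|H$. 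Hence $\mathcal{S}_k\in\mathcal{K}$ and therefore $\mathcal{S}_k\subseteq\mathcal{S}^*$. It remains to prove $\mathcal{S}^*\subseteq\mathcal{S}_k$.

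For the reverse inclusion I would denote by $(\mathcal{F}^{(t)},\mathcal{P}^{(t)})$, $t=0,\ldots,k$, the pair at the $t$-th cycle, with $(\mathcal{F}^{(0)},\mathcal{P}^{(0)})=(\mathcal{F},\mathcal{P})$, and prove by induction on $t$ that $\mathcal{S}^*\cup\{E|H\}\subseteq\mathcal{F}^{(t)}$; taking $t=k$ then yields $\mathcal{S}^*\subseteq\mathcal{S}_k$. Two preliminary facts are needed. First, $E|H$ is never deleted: by the coherence-reduction result underlying Algorithm~\ref{Alg1}, incoherence of $\mathcal{P}$ forces every $\mathcal{P}^{(t)}$ to be incoherent, whereas if $E|H$ were removed the surviving assessment would be $(1,\ldots,1)$ on a subfamily of the p-consistent family $\mathcal{F}_n$, hence coherent, a contradiction. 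Second, every conditional event with assessed probability $1$ contributes an equation forcing $\sum_{C_h\subseteq E_i^cH_i}\lambda_h=0$, while $E|H$ with assessed probability $0$ forces $\sum_{C_h\subseteq EH}\lambda_h=0$; thus every solution $\Lambda$ of a solvable $\Sigma^{(t)}$ assigns zero mass to each constituent contained in some $E_i^cH_i$ (with $E_i|H_i\in\mathcal{F}^{(t)}\setminus\{E|H\}$) and to each constituent contained in $EH$.

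The inductive step is the heart of the argument. Assume $\mathcal{S}^*\cup\{E|H\}\subseteq\mathcal{F}^{(t)}$ with $\Sigma^{(t)}$ solvable, and fix $E_j|H_j\in\mathcal{S}^*$; I would show $M_j=0$ in the notation of (\ref{PHI-I0}), so that $j\in I_0$ and $E_j|H_j$ is retained in $\mathcal{F}^{(t+1)}$. Since $p_j=1$ already annihilates the mass on $E_j^cH_j$, it suffices to show that every solution $\Lambda$ gives zero mass to each constituent $C_h\subseteq E_jH_j$. Suppose $\lambda_h>0$ for such a $C_h$. By the second fact, $C_h\nsubseteq E_i^cH_i$ for every $E_i|H_i\in\mathcal{S}^*$, i.e.\ $E_iH_i\vee H_i^c$ is true on $C_h$ for all $i\in\mathcal{S}^*$; moreover $C_h\subseteq E_jH_j\subseteq H_j$, so $\bigvee_{i\in\mathcal{S}^*}H_i$ is true on $C_h$. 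Hence $\mathcal{C}(\mathcal{S}^*)$ is true on $C_h$, and because $\mathcal{C}(\mathcal{S}^*)\subseteq E|H$, the event $E|H$ must be true on $C_h$, that is $C_h\subseteq EH$; but then $\lambda_h=0$, a contradiction. Therefore $\Phi_j(\Lambda)=0$ for every solution, so $M_j=0$, closing the induction and giving $\mathcal{S}^*\subseteq\mathcal{S}_k$; combined with the first inclusion this yields $\mathcal{S}_k=\mathcal{S}^*$.

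I expect the reverse inclusion to be the main obstacle, and within it the constituent analysis establishing $M_j=0$. The delicate point is reading off, from the special $(1,\ldots,1,0)$ assessment, that any positive-mass constituent inside $E_jH_j$ forces $\mathcal{C}(\mathcal{S}^*)$ to be true and then invoking $\mathcal{C}(\mathcal{S}^*)\subseteq E|H$ to reach a contradiction. The reason an induction over the cycles is needed, rather than a single application of Theorem~\ref{QC-SYSTEM}, is that the constituents at cycle $t$ are generated by the (possibly larger) current family $\mathcal{F}^{(t)}$ and not merely by $\mathcal{S}^*\cup\{E|H\}$; one must verify that the extra conditioning events introduced by $\mathcal{F}^{(t)}\setminus(\mathcal{S}^*\cup\{E|H\})$ never dislodge a member of $\mathcal{S}^*$, which is exactly what the argument above secures at every cycle.
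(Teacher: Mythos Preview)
Your argument is correct. The inclusion $\mathcal{S}_k\subseteq\mathcal{S}^*$ is obtained exactly as in the paper. For the reverse inclusion the two proofs diverge.

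The paper proceeds by contradiction: assuming $\mathcal{S}_k\subsetneq\mathcal{S}^*$, it locates the first cycle $r<k$ at which an element of $\mathcal{S}^*$ is discarded, so that $\mathcal{S}^*\cup\{E|H\}\subseteq\mathcal{F}^{(r)}$ while some index of $\mathcal{S}^*$ lies outside $I_0^{(r)}$. Theorem~\ref{GILIO-93} then transfers solvability of $\Sigma^{(r)}$ to the subsystem associated with $\mathcal{S}^*\cup\{E|H\}$, and Theorem~\ref{QC-SYSTEM} converts that solvability into $\mathcal{C}(\mathcal{S}^*)\nsubseteq E|H$, contradicting $\mathcal{S}^*\in\mathcal{K}$. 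Your proof, instead, runs a direct induction on the cycle index and never invokes Theorems~\ref{GILIO-93} or~\ref{QC-SYSTEM}: at each solvable cycle you read off, from the shape $(1,\ldots,1,0)$ of the assessment, that any positive-mass constituent inside $E_jH_j$ with $E_j|H_j\in\mathcal{S}^*$ would make $\mathcal{C}(\mathcal{S}^*)$ true, hence $EH$ true, hence carry zero mass; this forces $M_j=0$ for every $j$ indexing $\mathcal{S}^*$, so no element of $\mathcal{S}^*$ is ever dropped. Your separate argument that $E|H$ is never dropped (because its removal would leave a coherent all-ones sub-assessment, contradicting the incoherence propagated to every $(\mathcal{F}^{(t)},\mathcal{P}^{(t)})$) is also sound, and in fact makes explicit a point the paper uses tacitly when it writes $\mathcal{U}_k=\mathcal{S}_k\cup\{E|H\}$. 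What your route buys is self-containment: the whole reverse inclusion is proved by elementary constituent bookkeeping. What the paper's route buys is brevity, since the passage from the large system $\Sigma^{(r)}$ to the subsystem for $\mathcal{S}^*\cup\{E|H\}$ is delegated to Theorem~\ref{GILIO-93}, and the final contradiction to Theorem~\ref{QC-SYSTEM}.
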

\begin{proof}
By assertion 3 of Theorem \ref{ENTAIL-CS}, $\mathcal{P}$ is not coherent; moreover, by the step $(3. \Rightarrow 4.)$ of Theorem \ref{ENTAIL-CS}, we have $\mathcal{C}(\mathcal{S}_k) \subseteq E|H$, so that $\mathcal{S}_k \in \mathcal{K}$ and hence $\mathcal{S}_k \subseteq \mathcal{S}^*$. In order to prove that $\mathcal{S}_k = \mathcal{S}^*$, we will show that $\mathcal{S}_k \subset \mathcal{S}^*$ gives a contradiction.
If $\mathcal{S}_k=\mathcal{F}_n$, then $\mathcal{S}_k=\mathcal{S^*}$. Assume that $\mathcal{S}_k \subset \mathcal{F}_n$ and, by absurd, that $\mathcal{S}_k \subset \mathcal{S}^*$. By applying Algorithm 1 to the pair $(\mathcal{F},\mathcal{P})$ we obtain a partition   $\Gamma^{(1)},\Gamma^{(2)},\ldots, \Gamma^{(k)}$,with $k>1$, such that
\[
\mathcal{F}_n\cup \{E|H\}=\Gamma^{(1)}\cup \Gamma^{(2)}\cup\cdots\cup \Gamma^{(k)};\;\;\;\Gamma^{(i)}\cap \Gamma^{(j)}=\emptyset,\, \mbox{ if } i\neq j \,,
\]
where $\Gamma^{(k)} = \mathcal{U}_k = \mathcal{S}_k \cup \{E|H\}$. Then $\mathcal{S}^* \cap \Gamma^{(k)} = \mathcal{S}_k$. Now, by the (absurd) hypothesis $\mathcal{S}_k \subset \mathcal{S}^*$ it would follow $\mathcal{S}^* \cap \Gamma^{(j)} \neq \emptyset$ for at least one index $j < k$. Denoting by $r$ the minimum index such that $\mathcal{S}^* \cap \Gamma^{(r)} \neq \emptyset$ and defining
\[
\mathcal{F}^{(r)} = \Gamma^{(r)} \cup \cdots \cup \Gamma^{(k)} \,,\;\; \mathcal{F}^{(r+1)} = \Gamma^{(r+1)} \cup \cdots \cup \Gamma^{(k)} \,, \]
we would have $\mathcal{S}^* \subseteq \mathcal{F}^{(r)} \,,\,\; \mathcal{S}^* \setminus \mathcal{F}^{(r+1)} \neq \emptyset$; moreover, the system $\Sigma^{(r)}$ associated with the pair $(\mathcal{F}^{(r)},\mathcal{P}^{(r)})$ would be solvable. Defining
\[
J = \{j: E_j|H_j \in \mathcal{S}^*\} \,,\;\; \mathcal{F}_J = \mathcal{S}^* \cup \{E|H\} \,,
\] and denoting by $\mathcal{P}_J$ the sub-vector of $\mathcal{P}$ associated with $\mathcal{F}_J$, by Theorem \ref{GILIO-93} it would follow that the system $\Sigma_J$ associated with the pair $(\mathcal{F}_J,\mathcal{P}_J)$ would be solvable and by Theorem \ref{QC-SYSTEM} we would have $\mathcal{C}(\mathcal{S}^*) \nsubseteq E|H$, which is absurd because  $\mathcal{S}^* \in \mathcal{K}$. Therefore $\mathcal{S}_k = \mathcal{S}^*$.
\end{proof}
Based on Theorem \ref{THM-S-PCONSIST} and  Theorem \ref{ALG-GREAT}, we give below a suitably modified version of Algorithm 1, which allows to examine the following aspects: \\
$(i)$ checking for p-consistency of $\mathcal{F}_n$; \\ $(ii)$ checking for
 p-entailment of $E|H$ from $\mathcal{F}_n$; \\ $(iii)$ computation of the greatest element  $\mathcal{S}^*$.
\begin{algorithm}\label{ALG-PENTAILS}
{\rm Let  be given the pair $(\mathcal{F}_n,E|H)$, with $\mathcal{F}_n=\{E_1|H_1,\ldots, E_n|H_n\}$ and $H\nsubseteq E$.
\begin{enumerate}
\item Set $\mathcal{P}_n=(1,1,\ldots,1)$, where  $P(E_i|H_i)=1$, $i=1,\ldots, n$. Check the coherence of $\mathcal{P}_n$ on $\mathcal{F}_n$ by Algorithm 1.
      If  $\mathcal{P}_n$ on $\mathcal{F}_n$ is coherent then $\mathcal{F}_n$ is  p-consistent, set  $\mathcal{F}=\mathcal{F}_n \cup \{E|H\}$,  $\mathcal{P}=(\mathcal{P}_n,0)$ and go to step 2; otherwise $\mathcal{F}_n$ is not p-consistent and  procedure stops.
\item \label{i} Construct the system $\Sigma$ associated with $(\mathcal{F},\mathcal{P})$ and check its solvability.
\item If the system $\Sigma$ is not solvable then $\mathcal{F}_n$ p-entails $E|H$, $\mathcal{S^*}=\mathcal{F} \setminus \{E|H\}$
 and the procedure stops; otherwise (that is, $\Sigma$ solvable) compute the set $I_0$ defined in formula (\ref{PHI-I0}).
\item \label{ii} If $I_0 = \emptyset$ then $\mathcal{F}_n$ does not p-entail $E|H$  and the procedure
stops; otherwise set  $(\mathcal{F}, \mathcal{P}) = (\mathcal{F}_0, \mathcal{P}_0 )$ and go to step \ref{i}.
\end{enumerate}
}\end{algorithm}
\begin{remark}
We point out that in Algorithm \ref{ALG-PENTAILS} the family $\mathcal{F}_n \cup \{E|H\}$ must be intended as the family of $n+1$ conditional events $\{E_1|H_1,\ldots,E_n|H_n,E|H\}$ even if $E_i|H_i=E|H$ for some $i$; hence, at step 3, where $\mathcal{F}=\mathcal{S} \cup \{E|H\}$ for some $\mathcal{S} \subseteq \mathcal{F}_n$, to set  $\mathcal{S^*}=\mathcal{F} \setminus \{E|H\}$ must be intended as to set $\mathcal{S^*}=\mathcal{S}$.
\end{remark}
As for similar algorithms existing in literature, which analyze the problem of checking coherence and  propagation, also with Algorithm \ref{ALG-PENTAILS} the checking of p-consistency and of p-entailment is intractable when the family $\mathcal{F}_n$ is large. A detailed analysis of the different levels of complexity in this kind of problems has been given in \cite{BGLS05}. As a further aspect, Algorithm~\ref{ALG-PENTAILS} provides the greatest element (if any) $\mathcal{S}^*$ of the class $\mathcal{K}$. We observe that,  if in step 1  $\mathcal{F}_n$ results p-consistent, then the set $\mathcal{S}^*$ (if any) is determined in at most $n$ cycles of the algorithm.
The example below illustrates Algorithm~\ref{ALG-PENTAILS}.
\begin{example}
Given four logically independent events $A, B, C, D$, let us consider the family $\mathcal{F}_5 = \{C|B, B|A, A|(A \vee B), B|(A \vee B),D|A^c\}$ and the further conditional event $C|A$. By applying Algorithm~\ref{ALG-PENTAILS} to the pair $(\mathcal{F}_5, C|A)$, it can be proved that the assessment $\mathcal{P}_5=(1,1,1,1,1)$ on $\mathcal{F}_5$ is coherent; hence $\mathcal{F}_5$ is p-consistent. Moreover, as
\[\mathcal{C}(\mathcal{F}_5) = (ABC\vee A^cB^cD)|\Omega  \nsubseteq C|A \,,
\]
the starting system $\Sigma$, associated with the pair $(\mathcal{F}, \mathcal{P})$, where $\mathcal{F} = \mathcal{F}_5 \cup \{C|A\}$ and $\mathcal{P} = (1,1,1,1,1,0)$, is solvable and we have $I_0 = \{1,2,3,4\}$. The procedure goes to step \ref{i}, with $(\mathcal{F}, \mathcal{P})$=$(\mathcal{F}_0, \mathcal{P}_0)$, where
\[
\mathcal{F}_0 = \{C|B, B|A, A|(A \vee B), B|(A \vee B)\} \cup \{C|A\} \,,\;\; \mathcal{P}_0 = (1,1,1,1,0) \,.
\]
As
\[
\mathcal{C}(\{C|B, B|A, A|(A \vee B), B|(A \vee B)\}) = ABC|(A \vee B) \subset C|A \,,
\]
the system $\Sigma$ associated with the pair $(\mathcal{F}_0, \mathcal{P}_0)$ is not solvable; then, by Theorem \ref{ENTAIL-CS}, $\mathcal{F}_5$ p-entails $C|A$ and the procedure stops, with
\[
\mathcal{S}^*=\mathcal{F}_5\setminus \{D|A^c\}=\{C|B, B|A, A|(A \vee B), B|(A \vee B)\} \,.
\]
Moreover, by setting $\mathcal{S}^* = \mathcal{S}_1$ and defining
\[
\mathcal{S}_2 = \mathcal{S}_1 \setminus \{B|(A \vee B)\} = \{C|B, B|A, A|(A \vee B)\} \,,
 \]
it holds: $\mathcal{C}(\mathcal{S}_2) = ABC|(A \vee B) \; \subset \; B|(A \vee B)$; hence, by Theorem \ref{ENT-DIMIN},  $\mathcal{C}(\mathcal{S}_2) \subset C|A$, that is $\mathcal{S}_2\in \mathcal{K}$. We also observe that,  defining \[
\mathcal{S}_3 = \mathcal{S}_1 \setminus \{B|A \} = \{C|B, A|(A \vee B),B|(A\vee B)\} \,,
 \]
it is: $\mathcal{C}(\mathcal{S}_3) = ABC|(A \vee B) \; \subset \; B|A$; hence, by Theorem \ref{ENT-DIMIN},  $\mathcal{C}(\mathcal{S}_3) \subset C|A$, that is $\mathcal{S}_3\in \mathcal{K}$.
Finally, it could be proved that, for every nonempty subset $\mathcal{S}$ of $\mathcal{F}_5$, with $\mathcal{S} \notin \{\mathcal{S}_1,\mathcal{S}_2,\mathcal{S}_3\}$, it holds that $\mathcal{C}(\mathcal{S}) \nsubseteq C|A$, i.e. $\mathcal{S} \notin \mathcal{K}$; therefore $\mathcal{K}=\{\mathcal{S}_1,\mathcal{S}_2,\mathcal{S}_3\}$ and $\mathcal{S}_i$ p-entails $C|A$, $i=1,2,3$.
\end{example}
We observe that the problem of determining the class $\mathcal{K}$ by refining the methodology applied in the previous example seems intractable because the cardinality of $\mathcal{K}$ may be $2^n - 1$, as shown by the example below.
\begin{example}
Given the pair $(\mathcal{F}_n,E|H)$, assume that $E_i|H_i \subseteq E|H$ for every $i=1,\ldots,n$. Then, it holds that: (i) $\{E_i|H_i\} \in \mathcal{K}$ for every $i=1,\ldots,n$; \\ (ii) $ \{E_i|H_i,E_j|H_j\} \in \mathcal{K}$ for every $\{i,j\} \subseteq \{1,\ldots,n\}$; and so on. In this case the cardinality of $\mathcal{K}$ is $2^n - 1$.
\end{example}
\section{Conclusions}
In this paper we have studied the probabilistic entailment in the setting of coherence. In this framework we have analyzed the role of quasi conjunction and the Goodman-Nguyen inclusion relation for conditional events. By deepening some results given in a previous paper we have shown that, given any finite family $\mathcal{F}$ of conditional events and any nonempty subset $\mathcal{S}$ of $\mathcal{F}$, the quasi conjunction $\mathcal{C}(\mathcal{S})$ is p-entailed by $\mathcal{F}$. We have also examined the probabilistic semantics of QAND rule. Then, we have characterized p-entailment by many equivalent assertions. In particular, given any conditional event $E|H$, with $H \nsubseteq E$, we have shown the equivalence between p-entailment of $E|H$ from $\mathcal{F}$ and the existence of a nonempty subset $\mathcal{S}$ of $\mathcal{F}$ such that $\mathcal{C}(\mathcal{S})$ p-entails $E|H$. For a pair $(\mathcal{F},E|H)$ we have examined some alternative theorems related with p-consistency and p-entailment. Moreover, we have introduced  the (possibly empty) class $\K$ of the subsets $\mathcal{S}$ of $\mathcal{F}$ such that $\mathcal{C}(\mathcal{S})$ implies $E|H$. We have shown that the class $\K$ satisfies many properties, in particular, every $\mathcal{S} \in \mathcal{K}$ p-entails $E|H$, $\K$ is additive and has a greatest element which can be determined by applying Algorithm \ref{ALG-PENTAILS}. Finally, we have illustrated the theoretical results and Algorithm \ref{ALG-PENTAILS}, by examining an example. Interestingly, some of the results concerning the class $\K$ are connected with results on a similar but different class introduced in \cite[Sec 5]{2002BGLS-JANCL}; further work should compare these classes and clarify the differences between them. \\ \ \\
{\bf Acknowledgments} \\
The authors thank the anonymous referees for they valuable criticisms and suggestions.

\bibliographystyle{model1b-num-names}

\bibliography{biblioijar}

\begin{thebibliography}{44}
\expandafter\ifx\csname natexlab\endcsname\relax\def\natexlab#1{#1}\fi
\providecommand{\bibinfo}[2]{#2}
\ifx\xfnm\relax \def\xfnm[#1]{\unskip,\space#1}\fi
\bibitem[{Adams(1975)}]{Adam75}
\bibinfo{author}{E.W. Adams}, \bibinfo{title}{The logic of conditionals},
  \bibinfo{publisher}{Reidel}, \bibinfo{address}{Dordrecht},
  \bibinfo{year}{1975}.
\bibitem[{Baioletti et~al.(2002)Baioletti, Capotorti, Tulipani and
  Vantaggi}]{BCTV02}
\bibinfo{author}{M.~Baioletti}, \bibinfo{author}{A.~Capotorti},
  \bibinfo{author}{S.~Tulipani}, \bibinfo{author}{B.~Vantaggi},
  \bibinfo{title}{Simplification rules for the coherent probability assessment
  problem}, \bibinfo{journal}{Ann. Math. Artif. Intell.} \bibinfo{volume}{35}
  (\bibinfo{year}{2002}) \bibinfo{pages}{11--28}.
\bibitem[{Benferhat et~al.(1997)Benferhat, Dubois and Prade}]{BeDP97}
\bibinfo{author}{S.~Benferhat}, \bibinfo{author}{D.~Dubois},
  \bibinfo{author}{H.~Prade}, \bibinfo{title}{Nonmonotonic reasoning,
  conditional objects and possibility theory}, \bibinfo{journal}{Artificial
  Intelligence} \bibinfo{volume}{92} (\bibinfo{year}{1997})
  \bibinfo{pages}{259--276}.
\bibitem[{Biazzo and Gilio(2005)}]{BiGi05}
\bibinfo{author}{V.~Biazzo}, \bibinfo{author}{A.~Gilio}, \bibinfo{title}{Some
  theoretical properties of conditional probability assessments}, in:
  \bibinfo{editor}{L.~Godo} (Ed.), \bibinfo{booktitle}{ECSQARU}, volume
  \bibinfo{volume}{3571} of \textit{\bibinfo{series}{Lecture Notes in Computer
  Science}}, \bibinfo{publisher}{Springer}, \bibinfo{year}{2005}, pp.
  \bibinfo{pages}{775--787}.
\bibitem[{Biazzo et~al.(2002)Biazzo, Gilio, Lukasiewicz and
  Sanfilippo}]{2002BGLS-JANCL}
\bibinfo{author}{V.~Biazzo}, \bibinfo{author}{A.~Gilio},
  \bibinfo{author}{T.~Lukasiewicz}, \bibinfo{author}{G.~Sanfilippo},
  \bibinfo{title}{Probabilistic logic under coherence, model-theoretic
  probabilistic logic, and default reasoning in {S}ystem {P}},
  \bibinfo{journal}{Journal of Applied Non-Classical Logics}
  \bibinfo{volume}{12} (\bibinfo{year}{2002}) \bibinfo{pages}{189--213}.
\bibitem[{Biazzo et~al.(2005)Biazzo, Gilio, Lukasiewicz and
  Sanfilippo}]{BGLS05}
\bibinfo{author}{V.~Biazzo}, \bibinfo{author}{A.~Gilio},
  \bibinfo{author}{T.~Lukasiewicz}, \bibinfo{author}{G.~Sanfilippo},
  \bibinfo{title}{Probabilistic logic under coherence: complexity and
  algorithms.}, \bibinfo{journal}{Ann. Math. Artif. Intell.}
  \bibinfo{volume}{45} (\bibinfo{year}{2005}) \bibinfo{pages}{35--81}.
\bibitem[{Biazzo et~al.(2003{\natexlab{a}})Biazzo, Gilio and
  Sanfilippo}]{BiGS03}
\bibinfo{author}{V.~Biazzo}, \bibinfo{author}{A.~Gilio},
  \bibinfo{author}{G.~Sanfilippo}, \bibinfo{title}{Coherence checking and
  propagation of lower probability bounds}, \bibinfo{journal}{Soft Comput.}
  \bibinfo{volume}{7} (\bibinfo{year}{2003}{\natexlab{a}})
  \bibinfo{pages}{310--320}.
\bibitem[{Biazzo et~al.(2003{\natexlab{b}})Biazzo, Gilio and
  Sanfilippo}]{BiGS03b}
\bibinfo{author}{V.~Biazzo}, \bibinfo{author}{A.~Gilio},
  \bibinfo{author}{G.~Sanfilippo}, \bibinfo{title}{On the checking of
  g-coherence of conditional probability bounds.},
  \bibinfo{journal}{International Journal of Uncertainty, Fuzziness and
  Knowledge-Based Systems} \bibinfo{volume}{11, Suppl.2}
  (\bibinfo{year}{2003}{\natexlab{b}}) \bibinfo{pages}{75--104}.
\bibitem[{Biazzo et~al.(2009)Biazzo, Gilio and Sanfilippo}]{BiGS09}
\bibinfo{author}{V.~Biazzo}, \bibinfo{author}{A.~Gilio},
  \bibinfo{author}{G.~Sanfilippo}, \bibinfo{title}{On general conditional
  random quantities}, in: \bibinfo{editor}{T.~Augustin},
  \bibinfo{editor}{F.P.A. Coolen}, \bibinfo{editor}{S.~Moral},
  \bibinfo{editor}{M.C.M. Troffaes} (Eds.), \bibinfo{booktitle}{ISIPTA'09:
  Proceedings of the Sixth International Symposium on Imprecise Probability:
  Theory and Applications, Durham, United Kingdom, 14-17 July},
  \bibinfo{publisher}{SIPTA}, \bibinfo{year}{2009}, pp.
  \bibinfo{pages}{51--60}.
\bibitem[{Biazzo et~al.(2012)Biazzo, Gilio and Sanfilippo}]{BiGS12}
\bibinfo{author}{V.~Biazzo}, \bibinfo{author}{A.~Gilio},
  \bibinfo{author}{G.~Sanfilippo}, \bibinfo{title}{Coherent conditional
  previsions and proper scoring rules}, in: \bibinfo{editor}{S.~Greco},
  \bibinfo{editor}{B.~Bouchon-Meunier}, \bibinfo{editor}{G.~Coletti},
  \bibinfo{editor}{M.~Fedrizzi}, \bibinfo{editor}{B.~Matarazzo},
  \bibinfo{editor}{R.R. Yager} (Eds.), \bibinfo{booktitle}{Advances in
  Computational Intelligence}, volume \bibinfo{volume}{300} of
  \textit{\bibinfo{series}{Communications in Computer and Information
  Science}}, \bibinfo{publisher}{Springer}, \bibinfo{year}{2012}, pp.
  \bibinfo{pages}{146--156}.
\bibitem[{Brozzi et~al.(2012)Brozzi, Capotorti and Vantaggi}]{BrCV12}
\bibinfo{author}{A.~Brozzi}, \bibinfo{author}{A.~Capotorti},
  \bibinfo{author}{B.~Vantaggi}, \bibinfo{title}{Incoherence correction
  strategies in statistical matching}, \bibinfo{journal}{Int. J. Approx.
  Reasoning} \bibinfo{volume}{53} (\bibinfo{year}{2012}) \bibinfo{pages}{1124
  -- 1136}.
\bibitem[{Capotorti et~al.(2003)Capotorti, Galli and Vantaggi}]{CaGV03}
\bibinfo{author}{A.~Capotorti}, \bibinfo{author}{L.~Galli},
  \bibinfo{author}{B.~Vantaggi}, \bibinfo{title}{Locally strong coherence and
  inference with lower-upper probabilities}, \bibinfo{journal}{Soft Computing}
  \bibinfo{volume}{7} (\bibinfo{year}{2003}) \bibinfo{pages}{280--287}.
\bibitem[{Capotorti et~al.(2007)Capotorti, Lad and Sanfilippo}]{CaLS07}
\bibinfo{author}{A.~Capotorti}, \bibinfo{author}{F.~Lad},
  \bibinfo{author}{G.~Sanfilippo}, \bibinfo{title}{Reassessing accuracy rates
  of median decisions}, \bibinfo{journal}{The American Statistician}
  \bibinfo{volume}{61} (\bibinfo{year}{2007}) \bibinfo{pages}{132--138}.
\bibitem[{Capotorti et~al.(2010)Capotorti, Regoli and Vattari}]{CaRV10}
\bibinfo{author}{A.~Capotorti}, \bibinfo{author}{G.~Regoli},
  \bibinfo{author}{F.~Vattari}, \bibinfo{title}{Correction of incoherent
  conditional probability assessments}, \bibinfo{journal}{Int. J. Approx.
  Reasoning} \bibinfo{volume}{51} (\bibinfo{year}{2010})
  \bibinfo{pages}{718--727}.
\bibitem[{Capotorti and Vantaggi(2002)}]{CaVa02}
\bibinfo{author}{A.~Capotorti}, \bibinfo{author}{B.~Vantaggi},
  \bibinfo{title}{Locally strong coherence in inference processes},
  \bibinfo{journal}{Ann. Math. Artif. Intell.} \bibinfo{volume}{35}
  (\bibinfo{year}{2002}) \bibinfo{pages}{125--149}.
\bibitem[{Coletti and Scozzafava(2002)}]{CoSc02}
\bibinfo{author}{G.~Coletti}, \bibinfo{author}{R.~Scozzafava},
  \bibinfo{title}{Probabilistic logic in a coherent setting},
  \bibinfo{publisher}{Kluwer}, \bibinfo{address}{Dordrecht},
  \bibinfo{year}{2002}.
\bibitem[{Coletti and Vantaggi(2006)}]{CoVa06}
\bibinfo{author}{G.~Coletti}, \bibinfo{author}{B.~Vantaggi},
  \bibinfo{title}{Representability of ordinal relations on a set of conditional
  events}, \bibinfo{journal}{Theory and Decision} \bibinfo{volume}{60}
  (\bibinfo{year}{2006}) \bibinfo{pages}{137--174}.
\bibitem[{{de Finetti}(1936)}]{deFi35}
\bibinfo{author}{B.~{de Finetti}}, \bibinfo{title}{La logique de la
  probabilit\'e}, in: \bibinfo{booktitle}{Actes du Congr\`{e}s International de
  Philosophie Scientifique, Paris, 1935}, \bibinfo{publisher}{Hermann et C.ie,
  Paris}, \bibinfo{year}{1936}, pp. \bibinfo{pages}{IV 1--IV 9}.
\bibitem[{Dubois and Prade(1994)}]{DuboisPrade1994}
\bibinfo{author}{D.~Dubois}, \bibinfo{author}{H.~Prade},
  \bibinfo{title}{Conditional objects as nonmonotonic consequence
  relationships}, \bibinfo{journal}{IEEE Trans.\ Syst.\ Man Cybern.}
  \bibinfo{volume}{24} (\bibinfo{year}{1994}) \bibinfo{pages}{1724--1740}.
\bibitem[{de~Finetti(1970)}]{deFi70}
\bibinfo{author}{B.~de~Finetti}, \bibinfo{title}{Teoria delle probabilit\'a},
  \bibinfo{publisher}{Ed. Einaudi, 2 voll.}, \bibinfo{address}{Torino},
  \bibinfo{year}{1970}. \bibinfo{note}{English version: Theory of Probability 1
  (2), Chichester, Wiley, 1974 (1975)}.
\bibitem[{Fugard et~al.(2011)Fugard, Pfeifer, Mayerhofer and Kleiter}]{FPMK11}
\bibinfo{author}{A.J.B. Fugard}, \bibinfo{author}{N.~Pfeifer},
  \bibinfo{author}{B.~Mayerhofer}, \bibinfo{author}{G.D. Kleiter},
  \bibinfo{title}{How people interpret conditionals: Shifts toward the
  conditional event.}, \bibinfo{journal}{Journal of Experimental Psychology:
  Learning, Memory, and Cognition} \bibinfo{volume}{37} (\bibinfo{year}{2011})
  \bibinfo{pages}{635--648}.
\bibitem[{Gilio(1990)}]{Gilio90}
\bibinfo{author}{A.~Gilio}, \bibinfo{title}{Criterio di penalizzazione e
  condizioni di coerenza nella valutazione soggettiva della probabilit\`a},
  \bibinfo{journal}{Boll. Un. Mat. Ital.} \bibinfo{volume}{4-B}
  (\bibinfo{year}{1990}) \bibinfo{pages}{645--660}.
\bibitem[{Gilio(1992)}]{Gilio92}
\bibinfo{author}{A.~Gilio}, \bibinfo{title}{{$C_0$}-{C}oherence and {E}xtension
  of {C}onditional {P}robabilities}, in: \bibinfo{editor}{J.M. Bernardo},
  \bibinfo{editor}{J.O. Berger}, \bibinfo{editor}{A.P. Dawid},
  \bibinfo{editor}{A.F.M. Smith} (Eds.), \bibinfo{booktitle}{Bayesian
  Statistics 4}, \bibinfo{publisher}{Oxford University Press},
  \bibinfo{year}{1992}, pp. \bibinfo{pages}{633--640}.
\bibitem[{Gilio(1993)}]{Gili93}
\bibinfo{author}{A.~Gilio}, \bibinfo{title}{Probabilistic consistency of
  knowledge bases in inference systems}, in: \bibinfo{editor}{M.~Clarke},
  \bibinfo{editor}{R.~Kruse}, \bibinfo{editor}{S.~Moral} (Eds.),
  \bibinfo{booktitle}{ECSQARU}, volume \bibinfo{volume}{747} of
  \textit{\bibinfo{series}{Lecture Notes in Computer Science}},
  \bibinfo{publisher}{Springer}, \bibinfo{year}{1993}, pp.
  \bibinfo{pages}{160--167}.
\bibitem[{Gilio(1995)}]{Gili95}
\bibinfo{author}{A.~Gilio}, \bibinfo{title}{Algorithms for precise and
  imprecise conditional probability assessments}, in:
  \bibinfo{editor}{G.~Coletti}, \bibinfo{editor}{D.~Dubois},
  \bibinfo{editor}{R.~Scozzafava} (Eds.), \bibinfo{booktitle}{Mathematical
  Models for Handling Partial Knowledge in Artificial Intelligence},
  \bibinfo{publisher}{Plenum Press}, \bibinfo{address}{New York},
  \bibinfo{year}{1995}, pp. \bibinfo{pages}{231--254}.
\bibitem[{Gilio(2002)}]{Gilio2002}
\bibinfo{author}{A.~Gilio}, \bibinfo{title}{Probabilistic {R}easoning {U}nder
  {C}oherence in {S}ystem {P}}, \bibinfo{journal}{Ann. Math. Artif. Intell.}
  \bibinfo{volume}{34} (\bibinfo{year}{2002}) \bibinfo{pages}{5--34}.
\bibitem[{Gilio(2004)}]{Gilio2004}
\bibinfo{author}{A.~Gilio}, \bibinfo{title}{On {Cs\'asz\'ar's Condition in
  Nonmonotonic Reasoning}}, in: \bibinfo{booktitle}{$10$th International
  Workshop on Non-Monotonic Reasoning. Special Session: Uncertainty Frameworks
  in Non-Monotonic Reasoning}, \bibinfo{publisher}{Whistler BC, Canada},
  \bibinfo{address}{June 6--8}, \bibinfo{year}{2004}.
  \bibinfo{note}{\url{http://events.pims.math.ca/science/2004/NMR/uf.html}}.
\bibitem[{Gilio(2012)}]{Gili12}
\bibinfo{author}{A.~Gilio}, \bibinfo{title}{Generalizing inference rules in a
  coherence-based probabilistic default reasoning}, \bibinfo{journal}{Int. J.
  Approx. Reasoning} \bibinfo{volume}{53} (\bibinfo{year}{2012})
  \bibinfo{pages}{413--434}.
\bibitem[{Gilio and Over(2012)}]{GiOv12}
\bibinfo{author}{A.~Gilio}, \bibinfo{author}{D.~Over}, \bibinfo{title}{The
  psychology of inferring conditionals from disjunctions: {A} probabilistic
  study}, \bibinfo{journal}{Journal of Mathematical Psychology}
  \bibinfo{volume}{56} (\bibinfo{year}{2012}) \bibinfo{pages}{118--131}.
\bibitem[{Gilio and Sanfilippo(2010)}]{GiSa10}
\bibinfo{author}{A.~Gilio}, \bibinfo{author}{G.~Sanfilippo},
  \bibinfo{title}{Quasi {C}onjunction and p-entailment in nonmonotonic
  reasoning}, in: \bibinfo{editor}{C.~Borgelt}, \bibinfo{editor}{G.~Rodriguez},
  \bibinfo{editor}{W.~Trutschnig}, \bibinfo{editor}{M.A. Lubiano},
  \bibinfo{editor}{M.A. Gil}, \bibinfo{editor}{P.~Grzegorzewski},
  \bibinfo{editor}{O.~Hryniewicz} (Eds.), \bibinfo{booktitle}{Combining Soft
  Computing and Statistical Methods in Data Analysis},
  volume~\bibinfo{volume}{77} of \textit{\bibinfo{series}{{Advances in
  Intelligent and Soft Computing}}}, \bibinfo{publisher}{Springer-{V}erlag},
  \bibinfo{year}{2010}, pp. \bibinfo{pages}{321--328}.
\bibitem[{Gilio and Sanfilippo(2011{\natexlab{a}})}]{GiSa11a}
\bibinfo{author}{A.~Gilio}, \bibinfo{author}{G.~Sanfilippo},
  \bibinfo{title}{Coherent conditional probabilities and proper scoring rules},
  in: \bibinfo{editor}{F.~Coolen}, \bibinfo{editor}{G.~de~Cooman},
  \bibinfo{editor}{T.~Fetz}, \bibinfo{editor}{M.~Oberguggenberger} (Eds.),
  \bibinfo{booktitle}{ISIPTA'11: Proceedings of the Seventh International
  Symposium on Imprecise Probability: Theories and Applications},
  \bibinfo{publisher}{SIPTA}, \bibinfo{address}{Innsbruck},
  \bibinfo{year}{2011}{\natexlab{a}}, pp. \bibinfo{pages}{189--198}.
\bibitem[{Gilio and Sanfilippo(2011{\natexlab{b}})}]{GiSa11b}
\bibinfo{author}{A.~Gilio}, \bibinfo{author}{G.~Sanfilippo},
  \bibinfo{title}{Quasi conjunction and inclusion relation in probabilistic
  default reasoning}, in: \bibinfo{editor}{W.~Liu} (Ed.),
  \bibinfo{booktitle}{Symbolic and Quantitative Approaches to Reasoning with
  Uncertainty}, volume \bibinfo{volume}{6717} of
  \textit{\bibinfo{series}{Lecture Notes in Computer Science}},
  \bibinfo{publisher}{Springer Berlin / Heidelberg},
  \bibinfo{year}{2011}{\natexlab{b}}, pp. \bibinfo{pages}{497--508}.
\bibitem[{Gilio and Sanfilippo(2013)}]{GiSa12b}
\bibinfo{author}{A.~Gilio}, \bibinfo{author}{G.~Sanfilippo},
  \bibinfo{title}{{C}onjunction, {D}isjunction and {I}terated {C}onditioning of
  {C}onditional {E}vents}, in: \bibinfo{editor}{R.~Kruse},
  \bibinfo{editor}{M.R. Berthold}, \bibinfo{editor}{C.~Moewes},
  \bibinfo{editor}{M.A. Gil}, \bibinfo{editor}{P.~Grzegorzewski},
  \bibinfo{editor}{O.~Hryniewicz} (Eds.), \bibinfo{booktitle}{Synergies of Soft
  Computing and Statistics for Intelligent Data Analysis}, volume
  \bibinfo{volume}{190} of \textit{\bibinfo{series}{Advances in Intelligent
  Systems and Computing}}, \bibinfo{publisher}{Springer Berlin Heidelberg},
  \bibinfo{year}{2013}, pp. \bibinfo{pages}{399--407}.
\bibitem[{Gilio and Sanfilippo(sion)}]{GiSa11}
\bibinfo{author}{A.~Gilio}, \bibinfo{author}{G.~Sanfilippo},
  \bibinfo{title}{Quasi conjunction, quasi disjunction, t-norms and t-conorms:
  Probabilistic aspects}, \bibinfo{journal}{Information Sciences}
  (\bibinfo{year}{under revision}).
\bibitem[{Giugno and Lukasiewicz(2002)}]{GiLu02}
\bibinfo{author}{R.~Giugno}, \bibinfo{author}{T.~Lukasiewicz},
  \bibinfo{title}{{P-SHOQ(D): A Probabilistic Extension of SHOQ(D) for
  Probabilistic Ontologies in the Semantic Web}}, in:
  \bibinfo{editor}{S.~Flesca}, \bibinfo{editor}{S.~Greco},
  \bibinfo{editor}{N.~Leone}, \bibinfo{editor}{G.~Ianni} (Eds.),
  \bibinfo{booktitle}{JELIA}, volume \bibinfo{volume}{2424} of
  \textit{\bibinfo{series}{Lecture Notes in Computer Science}},
  \bibinfo{publisher}{Springer}, \bibinfo{year}{2002}, pp.
  \bibinfo{pages}{86--97}.
\bibitem[{Goodman and Nguyen(1988)}]{GoNg88}
\bibinfo{author}{I.~Goodman}, \bibinfo{author}{H.~Nguyen},
  \bibinfo{title}{Conditional objects and the modeling of uncertainties}, in:
  \bibinfo{editor}{M.~Gupta}, \bibinfo{editor}{T.~Yamakawa} (Eds.),
  \bibinfo{booktitle}{Fuzzy Computing}, \bibinfo{publisher}{North-Holland},
  \bibinfo{year}{1988}, pp. \bibinfo{pages}{119--138}.
\bibitem[{Kraus et~al.(1990)Kraus, Lehmann and Magidor}]{KrLM90}
\bibinfo{author}{S.~Kraus}, \bibinfo{author}{D.~Lehmann},
  \bibinfo{author}{M.~Magidor}, \bibinfo{title}{Nonmonotonic reasoning,
  preferential models and cumulative logics}, \bibinfo{journal}{Artificial
  Intelligence} \bibinfo{volume}{44} (\bibinfo{year}{1990})
  \bibinfo{pages}{167--207}.
\bibitem[{Lad et~al.(2012)Lad, Sanfilippo and Agr\'o}]{LaSA12}
\bibinfo{author}{F.~Lad}, \bibinfo{author}{G.~Sanfilippo},
  \bibinfo{author}{G.~Agr\'o}, \bibinfo{title}{Completing the logarithmic
  scoring rule for assessing probability distributions}, in:
  \bibinfo{editor}{J.M. Stern}, \bibinfo{editor}{M.D.S. Lauretto},
  \bibinfo{editor}{A.~Polpo}, \bibinfo{editor}{M.A. Diniz} (Eds.),
  \bibinfo{booktitle}{XI Brazilian Meeting On Bayesian Statistics}, volume
  \bibinfo{volume}{1490} of \textit{\bibinfo{series}{AIP Conference
  Proceedings}}, \bibinfo{publisher}{AIP}, \bibinfo{year}{2012}, pp.
  \bibinfo{pages}{13--30}.
\bibitem[{Lukasiewicz et~al.(2011)Lukasiewicz, Predoiu and
  Stuckenschmidt}]{LuPS11}
\bibinfo{author}{T.~Lukasiewicz}, \bibinfo{author}{L.~Predoiu},
  \bibinfo{author}{H.~Stuckenschmidt}, \bibinfo{title}{Tightly integrated
  probabilistic description logic programs for representing ontology mappings},
  \bibinfo{journal}{Ann. Math. Artif. Intell.} \bibinfo{volume}{63}
  (\bibinfo{year}{2011}) \bibinfo{pages}{385--425}.
\bibitem[{Lukasiewicz and Straccia(2008)}]{LuSt08}
\bibinfo{author}{T.~Lukasiewicz}, \bibinfo{author}{U.~Straccia},
  \bibinfo{title}{Managing uncertainty and vagueness in description logics for
  the semantic web}, \bibinfo{journal}{Journal of Web Semantics}
  \bibinfo{volume}{6} (\bibinfo{year}{2008}) \bibinfo{pages}{291--308}.
\bibitem[{Pelessoni and Vicig(2013)}]{PeVi12}
\bibinfo{author}{R.~Pelessoni}, \bibinfo{author}{P.~Vicig},
  \bibinfo{title}{{The Goodman-Nguyen Relation in Uncertainty Measurement}},
  in: \bibinfo{editor}{R.~Kruse}, \bibinfo{editor}{M.R. Berthold},
  \bibinfo{editor}{C.~Moewes}, \bibinfo{editor}{M.A. Gil},
  \bibinfo{editor}{P.~Grzegorzewski}, \bibinfo{editor}{O.~Hryniewicz} (Eds.),
  \bibinfo{booktitle}{Synergies of Soft Computing and Statistics for
  Intelligent Data Analysis}, volume \bibinfo{volume}{190} of
  \textit{\bibinfo{series}{Advances in Intelligent Systems and Computing}},
  \bibinfo{publisher}{Springer Berlin Heidelberg}, \bibinfo{year}{2013}, pp.
  \bibinfo{pages}{37--44}.
\bibitem[{Pfeifer and Kleiter(2006)}]{PfKl06}
\bibinfo{author}{N.~Pfeifer}, \bibinfo{author}{G.D. Kleiter},
  \bibinfo{title}{Inference in conditional probability logic},
  \bibinfo{journal}{Kybernetika} \bibinfo{volume}{42} (\bibinfo{year}{2006})
  \bibinfo{pages}{391--404}.
\bibitem[{Pfeifer and Kleiter(2009)}]{PfKl09}
\bibinfo{author}{N.~Pfeifer}, \bibinfo{author}{G.D. Kleiter},
  \bibinfo{title}{Framing human inference by coherence based probability
  logic}, \bibinfo{journal}{Journal of Applied Logic} \bibinfo{volume}{7}
  (\bibinfo{year}{2009}) \bibinfo{pages}{206--217}.
\bibitem[{Sanfilippo(2012)}]{Sanf12}
\bibinfo{author}{G.~Sanfilippo}, \bibinfo{title}{From imprecise probability
  assessments to conditional probabilities with quasi additive classes of
  conditioning events}, in: \bibinfo{booktitle}{Proceedings of the
  Twenty-Eighth Conference Annual Conference on Uncertainty in Artificial
  Intelligence (UAI-12)}, \bibinfo{publisher}{AUAI Press},
  \bibinfo{address}{Corvallis, Oregon}, \bibinfo{year}{2012}, pp.
  \bibinfo{pages}{736--745}.

\end{thebibliography}
\end{document}